\theoremstyle{plain}
\newtheorem{definition}{Definition}
\newtheorem{theorem}{Theorem}
\newtheorem{proposition}{Proposition}
\newtheorem{corollary}{Corollary}
\newtheorem{lemma}{Lemma} 
\newtheorem{theoremx}{Theorem}
\newtheorem{definitionx}{Definition}
\newtheorem{lemmax}{Lemma}
\theoremstyle{remark}
\newtheorem{remark}{Remark}
\title{Critical and Asymmetric Fourier Uniqueness Pairs}
\author{Torgeir Keun Lysen}
\email{torgeir.lysen@gmail.com}
\begin{document}
\begin{abstract}
    Motivated by the recent work of Kulikov, Nazarov, and  Sodin, we construct sufficient conditions for discrete subsets of $\mathbb{R}$, which lie between the supercritical and subcritical cases, to constitute Fourier uniqueness pairs. This family of critical uniqueness pairs includes pairs that are strongly asymmetric, stretching beyond those associated with zeros of zeta and L-functions, discovered by Bondarenko, Radchenko, and Seip, and getting arbitrarily close to the classical Shannon--Whittaker uniqueness pair. We also identify a somewhat more restrictive family of strongly asymmetric uniqueness pairs that yield frames and hence Fourier interpolation.
\end{abstract}
\maketitle
\begingroup
\footnotesize
\setlength{\parskip}{0pt}
\tableofcontents
\endgroup
\section{Introduction} 
We use the following definition of the Fourier transform for any $f\in L^1\cap L^2$:
\[
\hat{f}(\xi):=\int_\mathbb{R}f(x)e^{-2\pi ix\xi}\,dx.
\]
The classical Shannon--Whittaker interpolation formula states that if $\operatorname{supp}\hat{f}\subseteq [-\frac{w}{2},\frac{w}{2}]$, then 
\[
f(x)=\sum_{n\in\mathbb{Z}}f(n/w)\operatorname{sinc}(wx-n),
\]
where $\operatorname{sinc}(x)=\frac{\sin(\pi x)}{\pi x}$ (see \cite{Whittaker_1915}, \cite{1697831}). In particular, this means that if $f$ vanishes on the set $\Lambda=\frac{1}{w}\mathbb{Z}$ and $\hat{f}$ vanishes on the set $M=\mathbb{R}\backslash[-\frac{w}{2},\frac{w}{2}]$, then $f\equiv0$. We also note that by the classical Paley--Wiener theorem and the Beurling--Landau density condition (see \cite{MR102702}, \cite{Beurling1989Balayage}, \cite{Landau1967DensityConditions}),
\[
w<\; \liminf_{R \to \infty} \;\inf_{x \in \mathbb{R}}
   \frac{|\Lambda \cap [x,x+R]|}{R},
\]
this uniqueness property holds for any pair which is asymptotically denser and $\Lambda$ is uniformly discrete. Unfortunately, it only applies to functions whose Fourier transform is supported on a set of finite measure.

The study of modern Fourier uniqueness started when Radchenko and Viazovska~\cite{Radchenko2019} showed that every even Schwartz function $f$ could be expressed as
\[
f(x)=\sum_{n=0}^\infty a_n(x)f(\sqrt{n})+\sum_{n=0}^\infty \hat{a}_n(x)\hat{f}(\sqrt{n}),
\]
for some family of Schwartz functions $\{a_n(x)\}_{n=0}^\infty\subset \mathcal{S}$. Similarly to the Shannon--Whittaker pair, the existence of such an interpolation formula means that if $f$ vanishes on $\Lambda=\sqrt{\mathbb{N}_0}$ and $\hat{f}$ vanishes on $M=\sqrt{\mathbb{N}_0}$, then $f\equiv0$. However, unlike Shannon--Whittaker, we only require that $f$ and $\hat{f}$ vanish on discrete sets of points.

We will refer to pairs of sets with this uniqueness property as Fourier uniqueness pairs.
\begin{definitionx}[Fourier uniqueness pair]\label{def:fourier-uniqueness-pair}
    We call a pair $(\Lambda, M)$ where $\Lambda,M\subset \mathbb{R}$ a \emph{Fourier uniqueness pair} for some space $X\subset L^1\cap L^2$ if 
    \[
    f|_\Lambda=0\text{, }\hat{f}|_M=0\text{ and }f\in X\implies f(x)\equiv 0.
    \]
    A pair that is not a \emph{Fourier uniqueness pair} is called a \emph{Fourier non-uniqueness pair}.
\end{definitionx}
The previously mentioned pairs also have an interpolation property, which is stronger than just being a uniqueness pair.
\begin{definitionx}[Fourier interpolation pair]\label{def:fourier-interpolation-pair}
    We call a pair of multisets $(\Lambda, M)$ where $\Lambda,M\subset \mathbb{R}$ a \emph{Fourier interpolation pair} for some space $X\subset L^1\cap L^2$ if there exist families $\{a_\lambda\}_{\lambda\in\Lambda},\{b_\mu\}_{\mu\in M}\subset X$, $n:\Lambda\to\mathbb{N}_0$ and $k: M\to\mathbb{N}_0$ such that for every $f\in X$, we have
    \[
    f(x)=\sum_{\lambda\in\Lambda}a_\lambda(x) f^{(n(\lambda))}(\lambda)+\sum_{\mu\in M}b_\mu(x) \hat{f}^{(k(\mu))}(\mu),
    \]
    where the right-hand side converges pointwise. 
   
\end{definitionx}
One can view Shannon--Whittaker and Radchenko--Viazovska as the two endpoints of the same spectrum, where Radchenko--Viazovska is completely symmetric and Shannon--Whittaker is completely asymmetric.

\section{Prior results}
In 2022, Ramos and Sousa showed that there exists a sufficient density condition for discrete Fourier uniqueness~\cite{ramos2022fourier}. This condition was later significantly improved by Kulikov, Nazarov, and Sodin in 2025~\cite{kulikov2025}.
\begin{definitionx}[Supercritical and subcritical pairs]\label{def:density-criticality} Let the sequences \(\Lambda=(\lambda_j)_{j\in\mathbb{Z}}\) and \(M=(\mu_j)_{j\in\mathbb{Z}}\) be ordered so that
\begin{subequations} \label{eq:sequence-ordering}
\begin{alignat}{3}
  \dots\;&<\; \lambda_{j-1} \;&<\; \lambda_j \;&<\; \lambda_{j+1} \;<\; \dots,
  &\qquad \lim_{j\to\pm\infty} \lambda_j &= \pm\infty,
  \label{eq:lambda-ordering} \\
  \dots \;&<\; \mu_{j-1}    \;&<\; \mu_j    \;&<\; \mu_{j+1}    \;<\; \dots,
  &\qquad \lim_{j\to\pm\infty} \mu_j    &= \pm\infty.
  \label{eq:mu-ordering}
\end{alignat}
\end{subequations}

Given \(1<p,q<\infty\) with \(\frac{1}{p}+\frac{1}{q}=1\), we call a pair \((\Lambda, M)\) \emph{supercritical} if
\begin{align*}
        \limsup_{j\in\mathbb{Z}}|\lambda_j|^{p-1}(\lambda_{j+1}-\lambda_j)\leq\overline{\alpha}\quad\text{and}\quad\limsup_{j\in\mathbb{Z}}|\mu_j|^{q-1}(\mu_{j+1}-\mu_j)\leq \overline{\beta}
\end{align*}
with \(\overline{\alpha}^\frac{1}{p}\overline{\beta}^\frac{1}{q}< \frac{1}{2}\). Conversely, we refer to the pair as \emph{subcritical} if 
\begin{align*}
        \liminf_{j\in\mathbb{Z}}|\lambda_j|^{p-1}(\lambda_{j+1}-\lambda_j)\geq\underline{\alpha}\quad\text{and}\quad\liminf_{j\in\mathbb{Z}}|\mu_j|^{q-1}(\mu_{j+1}-\mu_j)\geq \underline{\beta}
\end{align*}
with 
\(\underline{\alpha}^\frac{1}{p}\underline{\beta}^\frac{1}{q}> \frac{1}{2}\).
\end{definitionx}
\begin{theoremx}[Kulikov--Nazarov--Sodin]\label{thm:kns-uniqueness}
    \textit{Suppose that $1 < p, q < \infty$, $\tfrac{1}{p} + \tfrac{1}{q} = 1$. Then}
\begin{itemize}
    \item[(i)] \textit{any supercritical pair $(\Lambda, M)$ is a uniqueness pair for $\mathcal{H}$;}
    \item[(ii)] \textit{any subcritical pair $(\Lambda, M)$ is a non-uniqueness pair for $\mathcal{H}$.}
\end{itemize}
\end{theoremx}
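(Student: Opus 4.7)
The plan is to treat parts (i) and (ii) with complementary complex-analytic methods, linked by a density-counting argument that converts the power-weighted gap hypothesis into an estimate on the number of zeros.

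A common first step for both parts would be the translation of the gap estimates into counting-function bounds. Writing $N_\Lambda(R) = |\{j : |\lambda_j| \leq R\}|$, the Riemann-sum identity
\begin{equation*}
    \sum_{|\lambda_j| \leq R} |\lambda_j|^{p-1}(\lambda_{j+1}-\lambda_j) \;\approx\; \int_0^R t^{p-1}\,dt \;=\; \frac{R^p}{p}
\end{equation*}
shows that the supercritical hypothesis forces $N_\Lambda(R) \gtrsim R^p/(p\overline{\alpha})$ and $N_M(R) \gtrsim R^q/(q\overline{\beta})$ for large $R$, while the subcritical hypothesis reverses both inequalities with $\underline{\alpha}, \underline{\beta}$. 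I would also perform the change of variable $u_j := \operatorname{sign}(\lambda_j)|\lambda_j|^p/p$, and its analogue on the $M$-side, so that the transformed sequences have uniformly controlled gaps, bringing the problem into the realm of Beurling--Landau sampling.

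For part (i), I would argue by contradiction: assume there is a nonzero $f \in \mathcal{H}$ with $f|_\Lambda = 0$ and $\hat{f}|_M = 0$. Using the decay and integrability intrinsic to membership in $\mathcal{H}$, I would lift $f$ and $\hat{f}$ to holomorphic functions on appropriate sectors in $\mathbb{C}$ via a Mellin-transform-type analysis tailored to the weight $|x|^{p-1}$. A Jensen/Blaschke zero-counting estimate then upper-bounds $N_\Lambda(R)$ by the growth of $f$; a dual bound for $\hat{f}$ is obtained on the sector associated with $q$. Combining the two via Plancherel and the conjugacy $1/p + 1/q = 1$ should produce an inequality of the form $\overline{\alpha}^{1/p}\overline{\beta}^{1/q} \geq \frac{1}{2}$, contradicting the supercritical hypothesis. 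For part (ii), I would construct a nonzero counter-example explicitly via a Hadamard canonical product of suitable genus whose zero set is exactly $\Lambda$; this product has order $p$ and type controlled by $\underline{\alpha}^{-1/p}$. A companion product of order $q$ accounts for the prescribed zeros on the Fourier side, and the strict subcritical bound $\underline{\alpha}^{1/p}\underline{\beta}^{1/q} > \frac{1}{2}$ leaves enough Paley--Wiener budget to glue them into an element of $\mathcal{H}$ with the required vanishing properties.

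The hardest step in both directions will be pinning down the sharp constant $\frac{1}{2}$: any looseness in the Jensen-type counting estimate of part (i) would spoil the threshold, and in part (ii) the Fourier-side zeros of the constructed $f$ must coincide precisely with $M$, which is delicate because the Fourier transform of a Hadamard product is not explicit. I expect this will require either a perturbation/approximation scheme that adjusts the zero set of $\hat{f}$ iteratively, or a direct construction that exploits a symmetry of $\mathcal{H}$ to identify $F$ and its Fourier image within a single closed-form family (for instance, Gaussian-modulated products, reminiscent of the theta-function constructions used in the Radchenko--Viazovska critical case).
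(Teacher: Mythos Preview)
This theorem is not proved in the present paper; it is quoted as a prior result of Kulikov, Nazarov, and Sodin. The paper does, however, expose the core mechanism of their uniqueness argument through Lemma~\ref{lem:poincare-wirtinger-density} (their Lemma~3.4(ii)): the engine is a Poincar\'e--Wirtinger inequality applied interval by interval, which converts vanishing on an $\ell$-dense set into a weighted $L^2$ bound $\Phi(t^2)\|f\|_2^2 \le \int \Phi(\xi^2)|\hat f(\xi)|^2\,d\xi$. The sharp threshold $\tfrac12$ comes from the sharp Poincar\'e--Wirtinger constant combined with Plancherel. The paper also remarks that KNS supplement this with complex-analytic arguments, needed to pass from a \emph{uniform} gap bound to the merely \emph{asymptotic} supercritical hypothesis; the present paper avoids those precisely because it works with uniform conditions.

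Your plan for part~(i) has a genuine obstruction. You want to lift $f\in\mathcal H$ to a holomorphic function and run Jensen/Blaschke zero counting. But membership in $\mathcal H$ only says $f, xf, \hat f, \xi\hat f\in L^2$; there is no analyticity at all, and a Mellin transform of such an $f$ has no reason to extend to any sector. More fundamentally, the logical direction of a Jensen argument is wrong for uniqueness: Jensen bounds the number of zeros \emph{above} by the growth of the function, so it is suited to showing a function has \emph{few} zeros. Here the hypothesis is that $f$ has \emph{many} zeros (a dense $\Lambda$) and you want to force $f\equiv 0$; zero-counting cannot do that without an a~priori growth bound you do not have. The KNS route sidesteps this completely: it is a real-variable $L^2$ inequality, and the complex analysis they invoke is an auxiliary tool (to regularize the density, in the spirit of Beurling--Malliavin), not a zero-counting step.

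For part~(ii) your outline is closer in spirit, and you correctly isolate the hard point: one cannot prescribe the zeros of $f$ and of $\hat f$ independently via Hadamard products, since the Fourier transform of such a product is not accessible. The actual construction does not attempt this. One instead exploits the subcritical slack $\underline\alpha^{1/p}\underline\beta^{1/q}>\tfrac12$ to build, via multiplier theorems of Beurling--Malliavin type (cf.\ Theorem~\ref{thm:beurling-malliavin} in the paper), a nonzero $f\in\mathcal H$ whose time and frequency localizations are simultaneously loose enough to be made to vanish on $\Lambda$ and $M$. Your ``Gaussian-modulated product'' idea gestures in a workable direction, but as written the proposal does not contain the key construction.
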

Here $\mathcal{H}$ is the Fourier symmetric Sobolev space
\[
\mathcal{H}:=\left\{f\in L^2: \int_\mathbb{R}(1+x^2)|f(x)|^2\,dx+\int_\mathbb{R}(1+\xi^2)|\hat{f}(\xi)|^2\,d\xi<\infty\right\}.
\]
The most natural example of supercritical pairs is 
\[
\Lambda=\left\{\pm(\alpha pj)^\frac{1}{p}\right\}_{j\in\mathbb{N}}\quad\text{and}\quad M=\left\{\pm(\beta qj)^\frac{1}{q}\right\}_{j\in\mathbb{N}},
\]
where $\alpha^\frac{1}{p}\beta^\frac{1}{q}<\frac{1}{2}$ and $\frac{1}{p}+\frac{1}{q}=1$.
Similar conditions have also been explored by Ramos and Sousa~\cite{ramos2024pauli}, and in $\mathbb{R}^d$ by Adve~\cite{adve2023density}.

\subsection{The critical density gap}
The case of \(\overline{\alpha}^\frac{1}{p}\overline{\beta}^\frac{1}{q},\underline{\alpha}^\frac{1}{p}\underline{\beta}^\frac{1}{q}=\frac{1}{2}\) remains largely unexplored. We will refer to such pairs as critical density pairs; note that this is equivalent to $\overline{\alpha}, \underline{\alpha}, \overline{\beta}, \underline{\beta}=\frac{1}{2}$ because of the scaling property of the Fourier transform $\mathcal{F}\{f(ax)\}(\xi)=\frac{1}{|a|}\hat{f}\left(\frac{\xi}{a}\right)$. Clearly, the pairs $(\pm\sqrt{\mathbb{N}_0}, \pm\sqrt{\mathbb{N}_0})$ described by Radchenko and Viazovska~\cite{Radchenko2019} and, similarly, the perturbed versions seen in Ramos and Sousa~\cite{Ramos_2023} and Cassese and Ramos~\cite{cassese2025regularityfourierinterpolationformulas} are critical density pairs satisfying some critical uniqueness condition. We also have a general sufficient condition (uniform supercriticality)~\cite{kulikov2025} for such critical density pairs to be Fourier uniqueness pairs:
\begin{align*}
    \sup_{j\in \mathbb{Z}}\max\{|\lambda_j|, |\lambda_{j+1}|\}(\lambda_{j+1}-\lambda_j),\; \sup_{j\in\mathbb{Z}}\max\{|\mu_j|,|\mu_{j+1}|\}(\mu_{j+1}-\mu_j)\leq\frac{1}{2}.
\end{align*}
In contrast to general supercritical pairs, we require some density around the origin to achieve the uniqueness property. We also notice that all the previous examples correspond to the symmetric case of $p,q=2$, and to our knowledge there are no known critical Fourier uniqueness pairs for $p,q\neq 2$.

This leads to the first motivating question:
\begin{center}
\textit{Do there exist critical Fourier uniqueness pairs for $p,q\neq 2$?}
\end{center}
\begin{remark}
    A general critical uniqueness condition cannot hold without requiring some density around the origin. The uniqueness pair in \cite{Radchenko2019} is tight, meaning that removing just a single point breaks the uniqueness property.
\end{remark}
\subsection{The super-asymmetric gap}
Critical pairs correspond asymptotically to the sets $\Lambda=\left\{\pm(\frac{p}{2}j)^\frac{1}{p}\right\}_{j\in\mathbb{N}_0}$ and $M=\left\{\pm(\frac{q}{2}j)^\frac{1}{q}\right\}_{j\in\mathbb{N}_0}$. It is then also natural to wonder about the pairs where $\Lambda$ or $M$ is sparser than $\{\pm j^{1-\varepsilon}\}_{j\in\mathbb{N}_0}$ for all $0<\varepsilon<1$.

In 2023, Bondarenko, Radchenko, and Seip discovered the interpolation pair $\Lambda=\left\{\frac{\log j}{4\pi}\right\}_{j\in\mathbb{N}_0}$ and $M=\left\{\frac{\rho-\frac{1}{2}}{i}\right\}$ for even functions in $\mathcal{H}_1$, where the $\rho$ are the nontrivial zeros of the Riemann zeta function~\cite{Bondarenko2023}. Here $\mathcal{H}_1$ denotes the space of functions $f(z)$ that are analytic in the strip $\bigl|\text{Im}\, z\bigr| < \tfrac12 + \varepsilon$
and satisfy the integrability condition
\[
\sup_{|y|<\tfrac12+\varepsilon}
\int_{-\infty}^{\infty}
\bigl|f(x+iy)\bigr|\,(1+|x|)\,dx
\;<\;\infty
\]
for some $\varepsilon>0$. Assuming the Riemann hypothesis is true, we have $M\subset\mathbb{R}$ and by the Riemann--von Mangoldt formula $\frac{\rho_j-\frac{1}{2}}{i}\sim \gamma_j\sim\frac{2\pi j}{\log j}$ where $\gamma_j=\operatorname{Im}(\rho_j)$. 

In 2021, Kulikov showed that under mild conditions there exists a necessary condition for discrete Fourier interpolation pairs \cite{Kulikov2021}. For all $W,T>1$, there should exist a $C>0$ such that 
\begin{align}\label{eq:kulikov-interpolation-condition}
|\Lambda\cap[-T,T]| +|M\cap[-W,W]|\geq4WT - C \log^2(4WT).
\end{align}
Kulikov's condition combined with the existence of the Bondarenko--Radchenko--Seip pair implies the existence of a class of uniqueness pairs which are significantly more uneven or asymmetric in their asymptotic growth than critical or supercritical pairs.

This naturally raises the question:
\begin{center}
\textit{Does there exist a sufficient uniqueness condition for strongly asymmetric pairs?}
\end{center}
However, even the Bondarenko--Radchenko--Seip pair is much less asymmetric than the classical Shannon--Whittaker pair.

Hence, one may also ask:
\begin{center}
    \textit{Is there a sufficient uniqueness condition that bridges the gaps between Kulikov--Nazarov--Sodin, Bondarenko--Radchenko--Seip, and Shannon--Whittaker?}
\end{center}
We will provide an answer to all three questions. Additionally, we prove that frames exist even for very asymmetric uniqueness pairs. Our main tool will be a family of inequalities that can be viewed as fractional, global versions of the Poincar\'{e}--Wirtinger inequality.
\section{Uniform criticality}
As in the uniformly supercritical case, we can construct critical density pairs by enforcing some density condition around the origin. However, our condition will be more general.
\begin{definition}[Subexponentially admissible]\label{def:subexp-admissible}
We say that a $C^1$-function $S:[0,\infty)\to[0,\infty)$ is \emph{subexponentially admissible} if it is convex, increasing and satisfies:
\begin{itemize}
  \item[(i)] There exists $T_0>0$ such that $S\in C^3([T_0,\infty))$ and the function 
  $t\mapsto \tfrac{d}{dt}S^2(t)$ is log-concave for all $t\geq T_0$;
  \item[(ii)] The integrability condition
  \[
  \int_\mathbb{R}\frac{|\log \max\{S^2(t),1\}|}{1+t^2}\,dt<\infty
  \]
  holds.
\end{itemize}
We also define the (left-continuous) inverse of $S$ by $S^{-1}(y) := \inf\{\,x \ge 0 : S(x) \ge y\,\}$.
\end{definition}
We will in particular be interested in the cases of $S(t)=t^a$ for some $1\leq a<\infty$ and $S(t)=ae^{bt^s}$ for some $0<a,b<\infty$ and $0<s<1$.
\begin{theorem}\label{thm:uniform-critical-uniqueness}
  Let $S$ or $S^{-1}$ be subexponentially admissible. Then there exist $d,\tilde{d}\in\mathbb{R}$ such that if $(\Lambda, M)$ satisfies (\ref{eq:sequence-ordering}),
    \begin{align*}
    \sup_{j\in \mathbb{Z}}S(|\lambda_j|+\tilde{d})(\lambda_{j+1}-\lambda_j)\leq \tfrac{1}{2},\quad\text{and}\quad   \sup_{j\in\mathbb{Z}}S^{-1}(|\mu_j|+d)(\mu_{j+1}-\mu_j)\leq\tfrac{1}{2},
    \end{align*}
then $(\Lambda,M)$ is a uniqueness pair for $\mathcal{H}$.
\end{theorem}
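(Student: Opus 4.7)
The plan is to translate the gap conditions on $\Lambda$ and $M$ into a pair of dual weighted Poincar\'e--Wirtinger inequalities and then combine them, using both admissibility hypotheses on $S$, to force $f\equiv 0$. Throughout I assume $f\in\mathcal{H}$ with $f|_\Lambda=\hat f|_M=0$.

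\textbf{Step 1 (local weighted Poincar\'e on a gap).} On each interval $[\lambda_j,\lambda_{j+1}]$, the vanishing of $f$ at both endpoints and the classical Poincar\'e--Wirtinger inequality give
\[
\int_{\lambda_j}^{\lambda_{j+1}}|f|^2\,dx\;\leq\;\frac{(\lambda_{j+1}-\lambda_j)^2}{\pi^2}\int_{\lambda_j}^{\lambda_{j+1}}|f'|^2\,dx,
\]
so $\int w^2|f|^2\leq\int|f'|^2$ for any weight with $w\leq \pi/(\lambda_{j+1}-\lambda_j)$ on the interval. I would take $w=2\pi S(|\cdot|+\tilde d)$; the gap hypothesis already delivers the right bound \emph{at} $\lambda_j$, so the remaining point is to show that $S(|x|+\tilde d)$ is comparable to $S(|\lambda_j|+\tilde d)$ uniformly across $[\lambda_j,\lambda_{j+1}]$. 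This is exactly the role of condition~(i): log-concavity of $(S^2)'$ controls how much $S$ can grow over a gap of width at most $1/(2S(|\lambda_j|+\tilde d))$. The shift $\tilde d$ (and $d$ on the Fourier side) is chosen large enough that all relevant endpoints lie in the log-concave regime $t\geq T_0$, absorbing a finite number of bad low-$|\lambda|$ intervals.

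\textbf{Step 2 (global inequalities via Plancherel).} Summing over $j\in\mathbb{Z}$ and using the Plancherel identity $\int|f'|^2=4\pi^2\int\xi^2|\hat f|^2$ yields
\[
\int_{\mathbb{R}} S(|x|+\tilde d)^2|f(x)|^2\,dx \;\leq\; C_1\int_{\mathbb{R}}\xi^2|\hat f(\xi)|^2\,d\xi,
\]
and the mirror argument applied to $\hat f$, $M$ and $S^{-1}$ in place of $f$, $\Lambda$ and $S$ gives
\[
\int_{\mathbb{R}} S^{-1}(|\xi|+d)^2|\hat f(\xi)|^2\,d\xi \;\leq\; C_2\int_{\mathbb{R}}x^2|f(x)|^2\,dx.
\]
These are the ``fractional, global Poincar\'e--Wirtinger inequalities'' previewed in the introduction, specialized to the matched pair $(S,S^{-1})$; both right-hand sides are finite because $f\in\mathcal{H}$.

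\textbf{Step 3 (closing the argument).} This is where I expect the main obstacle. Each inequality alone only sharpens the decay of $f$ or $\hat f$; to conclude $f\equiv 0$ I plan to iterate, feeding the improved decay of $\hat f$ back into the first inequality and that of $f$ back into the second, producing a sequence of weighted $L^2$-bounds on $f$ and $\hat f$ with weights built from successive compositions of $S$ and $S^{-1}$. Condition~(ii), the Szeg\H{o}/Krein-type integrability $\int|\log\max(S^2,1)|/(1+t^2)\,dt<\infty$, is precisely the non-quasianalyticity threshold that keeps these iterated weights below the uncertainty-principle ceiling, so that in the limit $f$ and $\hat f$ lie in a weighted class in which a classical zero-set uniqueness theorem (of Paley--Wiener / de~Branges type) applies. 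The asymptotic density of $M$ forced by the $S^{-1}$-gap condition is then enough to annihilate $\hat f$, hence $f\equiv 0$. The asymmetric hypothesis that only \emph{one} of $S$, $S^{-1}$ be subexponentially admissible reflects that condition~(ii) is required only on the limiting side of the iteration, which is what lets the same argument cover both the polynomial case $S(t)=t^a$ and the stretched-exponential case $S(t)=ae^{bt^s}$ with $s<1$.
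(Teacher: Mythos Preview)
Your Steps 1--2 produce a pair of inequalities of the shape
\[
\int S(|x|+\tilde d)^2|f|^2\,dx \;\le\; C_1\int \xi^2|\hat f|^2\,d\xi,
\qquad
\int S^{-1}(|\xi|+d)^2|\hat f|^2\,d\xi \;\le\; C_2\int x^2|f|^2\,dx,
\]
but these do not close: the right-hand side of the first carries the weight $\xi^2$, not $S^{-1}(|\xi|)^2$, so you cannot feed the second inequality back into the first. Your proposed ``iteration via compositions of $S$ and $S^{-1}$'' collapses for exactly this reason (each composition $S^{-1}\circ S$ is the identity), and the appeal to a Paley--Wiener/de~Branges endgame is not how the argument is completed. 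There is a genuine missing idea in Step~3.

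The paper resolves this by arranging the two inequalities to be literal reverses of one another. Instead of applying Poincar\'e--Wirtinger directly to $f$ on $[\lambda_j,\lambda_{j+1}]$, one applies the $\Phi$-weighted version (Lemma~\ref{lem:poincare-wirtinger-density}) to the localized functions $G_v f$ with $G\in C_c^\infty(-1,1)$, integrates in $v$, and uses Plancherel to obtain an inequality of the form
\[
\iint \Psi((x+y)^2)|f(x)|^2|\hat F(y)|^2\,dy\,dx \;<\; \iint \Phi((\xi+\eta)^2)|\hat f(\xi)|^2|\hat G(\eta)|^2\,d\eta\,d\xi
\]
(strict for $f\not\equiv 0$), with $\Phi(t)=t$ and $\Psi(t)=S^2(\sqrt t)$. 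The mirror argument on $M$ gives the same inequality with $\Phi$ and $\Psi$ swapped, i.e.\ the reverse strict inequality, and this is an immediate contradiction; no iteration is needed. The roles of the two admissibility conditions are also different from what you suggest: condition~(ii) is used, via the Beurling--Malliavin multiplier theorem, to produce mollifiers $F,G\in C_c^\infty(-1,1)$ whose Fourier transforms decay fast enough that the ``smeared'' weight
\[
\sigma_{\Psi,F}(t)=\Psi^{-1}\!\left(\int|\hat F(\eta)|^2\Psi((t+|\eta|)^2)\,d\eta\right)^{1/2}
\]
is finite; condition~(i) (log-concavity of $(S^2)'$) is then used, through a Jensen-type argument, to show $\sigma_{\Psi,F}'(t)\le 1$ and hence $\sigma_{\Psi,F}(t)\le t+\tilde d$. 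This is what converts the mollified density hypothesis back into the stated gap condition on $\Lambda$, and is not about controlling $S$ across a single gap.
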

 We can use Theorem~\ref{thm:uniform-critical-uniqueness} to create a clear analogy to uniformly supercritical pairs for general $p,q$ in the form of the following corollary.
\begin{corollary}\label{cor:critical-pairs-general-pq}
    For all $1<p,q<\infty$ and $\frac{1}{p}+\frac{1}{q}=1$, there exist $d_p,d_q\in\mathbb{R}$ such that if $(\Lambda, M)$ satisfies (\ref{eq:sequence-ordering}),
\begin{align*}
    \sup_{j\in \mathbb{Z}}(|\lambda_j|+d_p)^{p-1}(\lambda_{j+1}-\lambda_j)\leq \frac{1}{2},\quad\text{and}\quad
    \sup_{j\in\mathbb{Z}}(|\mu_j|+d_q)^{q-1}(\mu_{j+1}-\mu_j)\leq\frac{1}{2}.
\end{align*}
then $(\Lambda,M)$ is a uniqueness pair for $\mathcal{H}$.
\end{corollary}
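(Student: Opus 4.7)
The plan is to apply Theorem~\ref{thm:uniform-critical-uniqueness} to the explicit function
\[
S(t):=(t+1)^{p-1}-1,\qquad t\in[0,\infty),
\]
whose inverse is $S^{-1}(t)=(t+1)^{q-1}-1$ by the identity $(p-1)(q-1)=1$. Since Theorem~\ref{thm:uniform-critical-uniqueness} requires only one of $S$, $S^{-1}$ to be subexponentially admissible, and since these two functions have the same form with $p$ and $q$ interchanged, it suffices to verify admissibility under the hypothesis $p\ge 2$; the case $p\le 2$ (so $q\ge 2$) is then covered by running the same verification for $S^{-1}$, which is a function of the same form with the exponent $q-1\ge 1$.

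Assuming $p\ge 2$, I will verify the three clauses of Definition~\ref{def:subexp-admissible} for $S$. Convexity, monotonicity, and $C^3$ regularity on $[0,\infty)$ are immediate from the explicit formulas for $S',S'',S'''$, since the $+1$ shift keeps $S\in C^\infty([0,\infty))$ with no singularity at the origin. For log-concavity of $\tfrac{d}{dt}S^2=2SS'$, decompose
\[
\log(2SS')=\log(2(p-1))+(p-2)\log(t+1)+\log\bigl((t+1)^{p-1}-1\bigr).
\]
The middle summand is concave because $p\ge 2$. For the last summand, a direct computation gives
\[
\frac{d^2}{dt^2}\log\bigl((t+1)^{p-1}-1\bigr)=-\,\frac{(p-1)(t+1)^{p-3}\bigl[(p-2)+(t+1)^{p-1}\bigr]}{\bigl((t+1)^{p-1}-1\bigr)^2}\;\le\;0\qquad(t>0),
\]
where nonnegativity of the bracketed factor again uses $p\ge 2$. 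Hence $\log(2SS')$ is concave on any $[T_0,\infty)$ with $T_0>0$. The integrability condition holds because $S^2(t)\sim t^{2(p-1)}$ at infinity, so $|\log\max\{S^2(t),1\}|/(1+t^2)$ decays like $\log t/t^2$ and is integrable.

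Given the constants $\tilde d,d$ supplied by Theorem~\ref{thm:uniform-critical-uniqueness}, set $d_p:=\tilde d+1$ and $d_q:=d+1$. The corollary's hypotheses yield
\[
S(|\lambda_j|+\tilde d)(\lambda_{j+1}-\lambda_j)=\bigl[(|\lambda_j|+d_p)^{p-1}-1\bigr](\lambda_{j+1}-\lambda_j)\;\le\;(|\lambda_j|+d_p)^{p-1}(\lambda_{j+1}-\lambda_j)\;\le\;\tfrac12,
\]
and the analogous bound holds for $M$ upon replacing $S$ with $S^{-1}$, $p$ with $q$, and $d_p$ with $d_q$. The hypotheses of Theorem~\ref{thm:uniform-critical-uniqueness} are thus satisfied, and the uniqueness conclusion follows.

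The only non-routine step is the log-concavity computation, and this is precisely where the hypothesis $p\ge 2$ is essential: when $p<2$, the term $(p-2)\log(t+1)$ is convex and log-concavity genuinely fails for this choice of $S$. The reason Theorem~\ref{thm:uniform-critical-uniqueness} was stated to accept either $S$ or $S^{-1}$ as the admissible function is exactly to absorb this dichotomy, so that a single corollary covers the full range $1<p<\infty$. Everything else is bookkeeping: comparing $(|\lambda|+d_p)^{p-1}-1$ with $(|\lambda|+d_p)^{p-1}$ and collecting constants.
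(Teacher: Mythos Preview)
Your proof is correct and follows the same approach as the paper: apply Theorem~\ref{thm:uniform-critical-uniqueness} with a power-law $S$, using whichever of $S$, $S^{-1}$ has exponent $\ge 1$ to secure admissibility. The paper simply takes $S(t)=t^{q-1}$ (after assuming $p\le q$) and asserts admissibility without further comment; your shifted choice $(t+1)^{p-1}-1$ and the explicit log-concavity computation are a bit more careful---in particular the shift guarantees $C^3$ regularity at $t=0$, which the unshifted power can lack---but the idea is identical.
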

\begin{proof}
    Without loss of generality, assume $p\leq q$. Then choose $S(t)=t^{q-1}$ and $S^{-1}(t)=t^{p-1}$. Since $S$ is subexponentially
    admissible, we get the uniqueness from Theorem~\ref{thm:uniform-critical-uniqueness}.
\end{proof}
This clearly demonstrates the existence of critical uniqueness pairs for $p,q\neq 2$. 
\begin{remark}
    We can upper bound $d_p$ and $d_q$. However, determining the optimal constants is difficult, since they are likely just an artifact of our methods. For $S(x)=x$, which corresponds to the symmetric critical case, we know that $0<d_2$ by \cite{Radchenko2019} and by \cite{kulikov2025} we know that the space is finite-dimensional for any $0<d_2$.
\end{remark}

We can also create an analogy to general supercritical pairs with Theorem~\ref{thm:uniform-critical-uniqueness}. However, we can only show that the space of functions vanishing on such a pair is finite-dimensional and not $\{0\}$.
\begin{corollary}\label{cor:finite-dim-vanishing-space}
    Let $\nu(t):[0,\infty)\to[0,\infty)$ be some function such that $\displaystyle{\lim_{t\to\infty}}\nu(t)=\infty$ and $S(t)$ be subexponentially admissible. If there exists some $N\in\mathbb{N}$ such that 
    \begin{align*}
    \sup_{|j|\geq N}S(|\lambda_j|+\nu(|\lambda_j|))(\lambda_{j+1}-\lambda_j)\leq \frac{1}{2}\quad\text{and}\quad
    \sup_{|j|\geq N}S^{-1}(|\mu_j|+\nu(|\mu_j|))(\mu_{j+1}-\mu_j)\leq\frac{1}{2},
\end{align*}
for some pair $(\Lambda,M)$, then the space of functions vanishing on $(\Lambda,M)$ is finite-dimensional.
\end{corollary}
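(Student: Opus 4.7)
My plan is to reduce Corollary~\ref{cor:finite-dim-vanishing-space} to Theorem~\ref{thm:uniform-critical-uniqueness} by augmenting $(\Lambda,M)$ with finitely many extra points to produce a pair that satisfies the uniform critical hypotheses of that theorem, and then to bound the dimension of the vanishing space by a codimension count against these added constraints.

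Let $\tilde{d}$ and $d$ be the constants supplied by Theorem~\ref{thm:uniform-critical-uniqueness}. Since $\nu(t)\to\infty$, I pick $T>0$ with $\nu(t)\geq\max\{\tilde{d},d\}$ for all $t\geq T$. For any index with $|j|\geq N$ and $|\lambda_j|\geq T$, monotonicity of $S$ gives
\[
S(|\lambda_j|+\tilde{d})(\lambda_{j+1}-\lambda_j)\leq S(|\lambda_j|+\nu(|\lambda_j|))(\lambda_{j+1}-\lambda_j)\leq \tfrac{1}{2},
\]
and the analogous bound holds for the corresponding indices of $M$ with $S^{-1}$. The remaining ``bad'' indices — those with $|j|<N$ or with $|\lambda_j|<T$ (respectively $|\mu_j|<T$) — are finite in number, and the corresponding gaps all lie inside a compact interval on which $S$ is bounded. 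I then insert finitely many points into each bad gap to form an enlarged pair $(\Lambda',M')\supseteq(\Lambda,M)$ satisfying
\[
\sup_{j\in\mathbb{Z}}S(|\lambda'_j|+\tilde{d})(\lambda'_{j+1}-\lambda'_j)\leq\tfrac{1}{2}\quad\text{and}\quad \sup_{j\in\mathbb{Z}}S^{-1}(|\mu'_j|+d)(\mu'_{j+1}-\mu'_j)\leq\tfrac{1}{2}.
\]
Let $K$ be the total number of inserted points. By Theorem~\ref{thm:uniform-critical-uniqueness}, $(\Lambda',M')$ is a Fourier uniqueness pair for $\mathcal{H}$.

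Finally, set $V=\{f\in\mathcal{H}:f|_\Lambda=0,\ \hat{f}|_M=0\}$. The definition of $\mathcal{H}$ immediately yields $f,\hat{f}\in H^1(\mathbb{R})\hookrightarrow C_0(\mathbb{R})$, so point evaluation of $f$ or $\hat{f}$ at any fixed point is a continuous linear functional on $\mathcal{H}$. The $K$ evaluations at the newly inserted points therefore define a linear map $T\colon V\to\mathbb{C}^K$ whose kernel is exactly the vanishing space of $(\Lambda',M')$, which is $\{0\}$ by the previous paragraph. Hence $T$ is injective and $\dim V\leq K<\infty$.

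The only nontrivial input is Theorem~\ref{thm:uniform-critical-uniqueness} itself; once it is available, the rest is a compactness-and-counting argument. The one technical check is that the bad region is compact with $S$ bounded on it, so that each bad gap can be subdivided into sub-gaps of length at most $1/(2\max S)$ using only finitely many insertions — both of which follow directly from the continuity of $S$.
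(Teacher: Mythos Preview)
Your argument is correct and is exactly the intended reduction: the paper does not spell out a proof of this corollary, but it presents it as a direct consequence of Theorem~\ref{thm:uniform-critical-uniqueness}, and the standard way to extract finite-dimensionality from a uniqueness theorem is precisely the ``augment by finitely many points and count codimension'' argument you give. All the technical points (finiteness of the bad index set via $\lambda_j\to\pm\infty$, boundedness of $S$ on the compact bad region, continuity of point evaluations on $\mathcal{H}$ via $H^1\hookrightarrow C_0$) are handled correctly.
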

Here, the supercritical case corresponds to $\nu: t\mapsto\varepsilon t$ for some $\varepsilon>0$ and $S(x)=x^{p-1}$ for some $1<p<\infty$, and any $\nu(t)=o(t)$ corresponds to a critical pair.

To display the full capabilities of Theorem~\ref{thm:uniform-critical-uniqueness}, we provide an example of a family of asymmetric uniqueness pairs, the most asymmetric of which almost matches the asymptotic growth of those found in \cite{Bondarenko2023}.
\begin{corollary}
\label{cor:highly-asymmetric-pairs}
Let $0<s<1$. Then there exist sequences satisfying
\[
\lambda_j \sim sign(j)\left(\frac{\log |j|}{4\pi}\right)^{1/s}
\quad\text{and}\quad \mu_j \sim \frac{j}{2} \left(\frac{4\pi}{\log |j|}\right)^{1/s}
\]
as $j\to\pm\infty$, that are Fourier uniqueness pairs for $\mathcal{H}$.
\end{corollary}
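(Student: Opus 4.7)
The plan is to apply Theorem~\ref{thm:uniform-critical-uniqueness} with a subexponentially admissible $S$ whose growth is tuned so that the sharp gap bound forces the prescribed asymptotics. The heuristic is transparent: if $\lambda_{j+1}-\lambda_j \approx \frac{1}{2S(\lambda_j)}$ then $dj/d\lambda \sim 2S(\lambda)$, and demanding $\lambda_j \sim (\log j / 4\pi)^{1/s}$ forces $S(t) \sim e^{4\pi t^s}$ up to lower-order factors. The analogous calculation for $\mu_j$ gives the consistent companion requirement $S^{-1}(t) \sim (\log t / 4\pi)^{1/s}$.

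First I would set $S(t):=e^{4\pi(t+c)^s}$ with $c>0$ sufficiently large. This $S$ is $C^\infty$ and strictly increasing on $[0,\infty)$, and globally convex once $(s-1)+4\pi s(t+c)^s>0$, which holds as soon as $c^s>(1-s)/(4\pi s)$. For log-concavity of $\tfrac{d}{dt}S^2(t)=8\pi s(t+c)^{s-1}e^{8\pi(t+c)^s}$, differentiating $\log$ twice yields $\tfrac{1-s}{(t+c)^2}+8\pi s(s-1)(t+c)^{s-2}$, which is nonpositive once $(t+c)^s\geq 1/(8\pi s)$; again satisfied by enlarging $c$. Integrability follows from $\tfrac{|\log S^2(t)|}{1+t^2}\sim \tfrac{8\pi t^s}{1+t^2}$, which is integrable at infinity since $s<1$. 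Note also that $(t+c)^s=t^s+O(t^{s-1})$ gives $S(t)\sim e^{4\pi t^s}$ and hence $S^{-1}(t)\sim(\log t/4\pi)^{1/s}$ as $t\to\infty$.

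With $S$ fixed, Theorem~\ref{thm:uniform-critical-uniqueness} provides constants $d,\tilde d$. I then define $\Lambda=\{\lambda_j\}$ and $M=\{\mu_j\}$, symmetric about $0$, by $\lambda_j:=\Phi^{-1}(j)$ and $\mu_j:=\Psi^{-1}(j)$, where $\Phi$ and $\Psi$ are the unique odd antiderivatives of the even functions $u\mapsto 2S(|u|+\tilde d)$ and $u\mapsto 2S^{-1}(|u|+d)$ respectively, with $S^{-1}$ extended to $[0,1)$ so that $\Psi$ is well defined everywhere. Both sequences satisfy (\ref{eq:sequence-ordering}); the telescoping identities $\Phi(\lambda_{j+1})-\Phi(\lambda_j)=1=\Psi(\mu_{j+1})-\Psi(\mu_j)$ combined with monotonicity of $S$ and $S^{-1}$ give $S(|\lambda_j|+\tilde d)(\lambda_{j+1}-\lambda_j)\leq \tfrac12$ and $S^{-1}(|\mu_j|+d)(\mu_{j+1}-\mu_j)\leq \tfrac12$, so Theorem~\ref{thm:uniform-critical-uniqueness} applies and $(\Lambda,M)$ is a Fourier uniqueness pair for $\mathcal{H}$.

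It remains to read off the asymptotics. An integration by parts gives $\Phi(\lambda)\sim\int_0^\lambda 2e^{4\pi u^s}\,du\sim\tfrac{\lambda^{1-s}}{2\pi s}\,e^{4\pi\lambda^s}$; taking logarithms and using that the exponent dominates the polynomial prefactor yields $4\pi\lambda_j^s\sim\log j$, whence $\lambda_j\sim(\log j/4\pi)^{1/s}$. For $\mu_j$, slow variation of $(\log u)^{1/s}$ gives $\Psi(\mu)\sim 2\mu(\log\mu/4\pi)^{1/s}$, so $\mu_j(\log\mu_j/4\pi)^{1/s}\sim j/2$; substituting the self-consistent $\log\mu_j=\log j+O(\log\log j)$ into the denominator delivers $\mu_j\sim(j/2)(4\pi/\log j)^{1/s}$. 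The main technical point will be sharpening the Riemann-sum/integral comparison enough to survive the $(1/s)$-th-root extraction and justifying the substitution $\log\mu_j\to\log j$ inside the exponent for $M$; both amount to careful bookkeeping of subdominant terms.
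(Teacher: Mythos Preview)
Your approach is essentially the paper's: both apply Theorem~\ref{thm:uniform-critical-uniqueness} with $S(t)$ of the form $\text{const}\cdot e^{4\pi t^s}$ (the paper takes $2\pi e^{4\pi t^s}$, you take $e^{4\pi(t+c)^s}$, an immaterial difference), verify subexponential admissibility, and then exhibit sequences meeting the gap bound with the required growth. The only cosmetic distinction is that the paper writes down explicit formulas for $\tilde\lambda_j,\tilde\mu_j$ (including the subdominant $\log\log$ correction in $\tilde\lambda_j$) and checks the gap condition directly, whereas you define $\lambda_j=\Phi^{-1}(j)$, $\mu_j=\Psi^{-1}(j)$ via antiderivatives of $2S(\,\cdot+\tilde d\,)$ and $2S^{-1}(\,\cdot+d\,)$ and extract the asymptotics afterward; both routes are equivalent.
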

\begin{proof}
 Similarly to Corollary~\ref{cor:critical-pairs-general-pq}, we apply Theorem~\ref{thm:uniform-critical-uniqueness} with a specific choice of $S$.
 
    Let $S(t)=2\pi e^{4\pi t^s}$ and hence $S^{-1}(t)=\left(\frac{1}{4\pi}\log\frac{t}{2\pi}\right)^\frac{1}{s}$.
    Then by Theorem~\ref{thm:uniform-critical-uniqueness} there exist constants $d,\tilde{d}\geq 0$ such that any pair $(\Lambda,M)$ satisfying (\ref{eq:sequence-ordering}),
    \begin{align*}
        \sup_{j\in\mathbb{Z}}2\pi e^{4\pi (|\lambda_j|+\tilde{d})^s}(\lambda_{j+1}-\lambda_j)\leq\frac{1}{2}, \quad\text{and}\quad
        \sup_{j\in\mathbb{Z}}\left(\frac{1}{4\pi}\log\frac{|\mu_j|+d}{2\pi}\right)^\frac{1}{s}(\mu_{j+1}-\mu_j)\leq\frac{1}{2}
    \end{align*}
    is a Fourier uniqueness pair.

   Consider the sequences $\tilde{\Lambda}:=\{\tilde{\lambda}_j\}_{j\geq J}$ and $\tilde{M}:=\{\tilde{\mu}_j\}_{j\geq J}$ where 
   \begin{align*}
          \tilde{\lambda}_j:=\left(\frac{\log j}{4\pi}-\frac{1-s}{4\pi s}\log\frac{\log j}{4\pi}+\frac{\log s}{4\pi}\right)^\frac{1}{s}-\tilde{d},\quad\tilde{\mu}_j:=\frac{j}{2}\left(\frac{4\pi}{\log j}\right)^\frac{1}{s}-d,
   \end{align*}
   and $J\in\mathbb{N}$ is sufficiently large so that $\tilde{\mu}_j$ and $\tilde{\lambda}_j$ are non-negative and well-defined. This pair $(\tilde{\Lambda},\tilde{M})$ satisfies the inequalities above. 
   
   To satisfy (\ref{eq:sequence-ordering}) we can add finitely many points to the sequences $\tilde{\Lambda}\cup(-\tilde{\Lambda})$ and $\tilde{M}\cup(-\tilde{M})$. Hence, we get a uniqueness pair $(\Lambda,M)$ that satisfies the desired asymptotics.   
\end{proof}
Letting $s\to1$, we see that this asymptotically matches the pair found by Bondarenko, Radchenko, and Seip. As noted in \cite{Kulikov2021} and \cite{Bondarenko2023}, applying (\ref{eq:kulikov-interpolation-condition}) to the interpolation pair in \cite{Bondarenko2023} yields the inequality
\[
N(T)\geq\frac{T}{2\pi}\log\frac{T}{2\pi e}-C\log^2T,
\]
where $N(T):=|\{\rho:0<\operatorname{Im}(\rho)< T\}|$. This is strikingly similar to the Riemann--von Mangoldt formula
\[
N(T)=\frac{T}{2\pi}\log\frac{T}{2\pi e}+O(\log T).
\]
As we will see in Proposition~\ref{prop:kulikov-density-condition}, this near equality also applies to the sparsest uniqueness pairs described by Theorem~\ref{thm:uniform-critical-uniqueness}, meaning that they all exhibit a similar kind of critical density.
\begin{proposition}\label{prop:kulikov-density-condition}
    Let $S:[0,\infty)\to[0,\infty)$ be an unbounded, increasing $C^1$-function and $(\Lambda,M)$ be a pair satisfying (\ref{eq:sequence-ordering}),
    \begin{align*}
        \lim_{j\to\pm\infty}S(|\lambda_j|)(\lambda_{j+1}-\lambda_j)=\alpha,\quad\text{and}\quad\lim_{j\to\pm\infty}S^{-1}(|\mu_j|)(\mu_{j+1}-\mu_j)=\alpha
    \end{align*}
    for some $0<\alpha<\infty$. Then 
    \[
    \inf_{W,T\geq K}\frac{|\Lambda\cap[-T,T]|+|M\cap[-W,W]|}{4WT}\to \frac{1}{2\alpha}
    \]
    as $K\to\infty$.
\end{proposition}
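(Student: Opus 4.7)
The plan is to reduce the claim to a Young-type integral inequality. Set
\[
F(T):=\int_0^T S(t)\,dt,\qquad G(W):=\int_0^W S^{-1}(t)\,dt,
\]
with the convention that $S^{-1}\equiv 0$ on $[0,S(0))$. The substitution $y=S(x)$ gives the identity $F(T)+G(S(T))=T\,S(T)$, and since $W\mapsto F(T)+G(W)-TW$ has derivative $S^{-1}(W)-T$ vanishing exactly at $W=S(T)$, this upgrades to the Young-type inequality $F(T)+G(W)\geq TW$, with equality if and only if $W=S(T)$.

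The heart of the argument is then the pair of counting asymptotics
\[
|\Lambda\cap[-T,T]|\sim\tfrac{2}{\alpha}F(T),\qquad |M\cap[-W,W]|\sim\tfrac{2}{\alpha}G(W),\qquad T,W\to\infty.
\]
To prove the first, fix $\varepsilon>0$ and choose $J$ with $|S(|\lambda_j|)(\lambda_{j+1}-\lambda_j)-\alpha|<\varepsilon$ for $|j|\geq J$. Monotonicity of $S$ sandwiches
\[
F(\lambda_j)-F(\lambda_J)=\sum_{k=J}^{j-1}\int_{\lambda_k}^{\lambda_{k+1}}S(t)\,dt
\]
between the lower and upper Riemann sums $\sum S(\lambda_k)(\lambda_{k+1}-\lambda_k)$ and $\sum S(\lambda_{k+1})(\lambda_{k+1}-\lambda_k)$. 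The lower sum is $(j-J)(\alpha+o(1))$ by hypothesis, and since $\lambda_{k+1}-\lambda_k\sim\alpha/S(\lambda_k)\to 0$ (because $S$ is unbounded), the $C^1$ hypothesis on $S$ forces $S(\lambda_{k+1})/S(\lambda_k)\to 1$, so the upper sum has the same leading asymptotic. Inverting $F(\lambda_j)=\alpha j+o(j)$ gives $|\Lambda\cap(0,T]|\sim F(T)/\alpha$; the negative side of $\Lambda$ and the two halves of $M$ are handled identically, using $S^{-1}$ in place of $S$ for $M$.

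Combining these two ingredients yields both bounds. For any $\varepsilon>0$ and all $T,W$ sufficiently large,
\[
\frac{|\Lambda\cap[-T,T]|+|M\cap[-W,W]|}{4TW}\;\geq\;\frac{1-\varepsilon}{2\alpha}\cdot\frac{F(T)+G(W)}{TW}\;\geq\;\frac{1-\varepsilon}{2\alpha},
\]
so $\inf_{W,T\geq K}\geq\tfrac{1}{2\alpha}-o_K(1)$. For the matching direction, take the pair $(T,S(T))$ with $T$ large; both coordinates exceed $K$ since $S$ is unbounded, Young's inequality holds with equality there, and the counting asymptotics give
\[
\frac{|\Lambda\cap[-T,T]|+|M\cap[-S(T),S(T)]|}{4\,T\,S(T)}\;\longrightarrow\;\frac{1}{2\alpha},
\]
so $\inf_{W,T\geq K}\leq\tfrac{1}{2\alpha}+o_K(1)$ as well.

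The main technical hurdle is the regularity step $S(\lambda_{k+1})/S(\lambda_k)\to 1$ in the counting asymptotic. One controls it by the mean-value estimate
\[
\frac{S(\lambda_{k+1})}{S(\lambda_k)}-1\;\leq\;\frac{(\lambda_{k+1}-\lambda_k)\sup_{[\lambda_k,\lambda_{k+1}]}S'}{S(\lambda_k)},
\]
which together with $\lambda_{k+1}-\lambda_k\sim\alpha/S(\lambda_k)$ reduces the issue to $S'/S^2$ tending to zero along the sequence \textemdash{} immediate for powers, stretched exponentials, and the other concrete $S$ targeted by Theorem~\ref{thm:uniform-critical-uniqueness}. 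Once this step is in place, the rest of the proof is simply the observation that the pair $W=S(T)$ saturates Young's inequality.
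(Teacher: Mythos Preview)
Your overall architecture---Young's inequality $F(T)+G(W)\geq TW$ (with equality at $W=S(T)$) combined with counting asymptotics for $|\Lambda\cap[-T,T]|$ and $|M\cap[-W,W]|$---matches the paper's proof exactly; the paper's Lemma~\ref{lem:fenchel-young-integral} is precisely your Young-type inequality.

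The genuine gap is in the counting step. You claim $|\Lambda\cap[-T,T]|\sim\tfrac{2}{\alpha}F(T)$ by sandwiching $F(\lambda_j)$ between the lower and upper Riemann sums, but the upper sum $\sum S(\lambda_{k+1})(\lambda_{k+1}-\lambda_k)$ is only $\sim j\alpha$ if $S(\lambda_{k+1})/S(\lambda_k)\to 1$. Your mean-value reduction of this to $\sup_{[\lambda_k,\lambda_{k+1}]}S'/S(\lambda_k)^2\to 0$ is \emph{not} a consequence of the proposition's hypotheses: $S$ is only assumed $C^1$, increasing, unbounded, so $S'$ may have arbitrarily tall spikes on short intervals. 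Saying it is ``immediate for powers, stretched exponentials'' concedes that the argument does not cover the general statement.

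The paper sidesteps this entirely. Rather than proving the exact asymptotic $|\Lambda\cap[-T,T]|\sim\tfrac{2}{\alpha}F(T)$, it bounds the derivative of a piecewise-linear interpolant $f$ of the counting function: for $T\in[\lambda_m,\lambda_{m+1}]$ one has $f'(T)=\tfrac{1}{\lambda_{m+1}-\lambda_m}+\text{(neg.\ side)}\sim\tfrac{2}{\alpha}S(\lambda_m)$, and since gaps shrink to $0$ while $\lambda_m\to\infty$, monotonicity alone gives $S(T/(1+\varepsilon))\leq S(\lambda_m)\leq S(T)$ for large $T$. Integrating yields
\[
(2\alpha^{-1}+o(1))\,F\!\bigl(T/(1+\varepsilon)\bigr)\;\leq\;|\Lambda\cap[-T,T]|\;\leq\;(2\alpha^{-1}+o(1))\,F\!\bigl(T(1+\varepsilon)\bigr),
\]
with the analogous bounds for $M$. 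Young's inequality then pins the infimum between $\tfrac{1}{2\alpha(1+\varepsilon)^2}$ and $\tfrac{(1+\varepsilon)^2}{2\alpha}$, and letting $\varepsilon\downarrow 0$ finishes. Your proof becomes complete if you replace the exact asymptotic by these one-sided $(1+\varepsilon)$-dilated bounds; the lower Riemann sum already gives you the upper bound on the count, and the paper's dilation trick supplies the missing lower bound without any control on $S'$.
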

Combining Proposition~\ref{prop:kulikov-density-condition} with Kulikov's condition
\begin{align*}
|\Lambda\cap[-T,T]| +|M\cap[-W,W]|\geq4WT - C \log^2(4WT),
\end{align*}
suggests that our conditions are close to optimal, since the required density for Fourier interpolation pairs and uniqueness pairs is likely comparable in $\mathcal{S}$ and $\mathcal{H}$. However, this by itself is not enough to guarantee that Theorem~\ref{thm:uniform-critical-uniqueness} is optimal as there could exist Fourier uniqueness pairs for $\mathcal{H}$ that are not interpolation pairs.
\begin{remark}
    A classical result \cite[p.~246]{titchmarsh1986theory} states that there exists an $\varepsilon>0$ such that 
\[
\limsup_{j \to \infty} \frac{\log \gamma_j}{4\pi}(\gamma_{j+1} - \gamma_j) > \frac{1}{2} + \varepsilon
\quad \text{and} \quad
\liminf_{j \to \infty} \frac{\log \gamma_j}{4\pi}(\gamma_{j+1} - \gamma_j) < \frac{1}{2} - \varepsilon,
\]
where $\gamma_j=\operatorname{Im}(\rho_j)$ is the imaginary part of the nontrivial zeros of the zeta function. Additionally, under the GUE hypothesis, the normalized spacing distribution agrees with that of GUE eigenvalues, which has unbounded support on 
$[0,\infty)$ \cite{Odlyzko1987ZetaSpacings}. Consequently, in that case,
\[
\limsup_{j \to \infty} \frac{\log \gamma_j}{4\pi}(\gamma_{j+1} - \gamma_j) =\infty
\quad \text{and} \quad
\liminf_{j \to \infty} \frac{\log \gamma_j}{4\pi}(\gamma_{j+1} - \gamma_j)=0.
\]
Choosing $S(t)=2\pi e^{4\pi t^s}$ and hence $S^{-1}(t)=\left(\frac{1}{4\pi}\log\frac{t}{2\pi}\right)^\frac{1}{s}$ as in Corollary~\ref{cor:highly-asymmetric-pairs}, but with $s=1$, this implies that the interpolation points do not need to satisfy
\begin{align*}
        \limsup_{j\to\pm\infty}S(|\lambda_j|)(\lambda_{j+1}-\lambda_j)\leq\frac{1}{2}\quad\text{and}\quad\limsup_{j\to\pm\infty}S^{-1}(|\mu_j|)(\mu_{j+1}-\mu_j)\leq\frac{1}{2}
\end{align*}
and can deviate significantly at infinitely many points as long as they still satisfy (\ref{eq:kulikov-interpolation-condition}). 
\end{remark}
\section{Pushing asymmetry}
\subsection{Uniqueness}
We can actually push the asymmetry far beyond Corollary~\ref{cor:highly-asymmetric-pairs} and the pair in Bondarenko--Radchenko--Seip, and arbitrarily close to Shannon--Whittaker. 

\begin{definition}[Widely admissible]\label{def:widely-admissible}
An increasing function \(S:[0,\infty)\to [0,\infty)\) is called \emph{widely admissible} if $S(x)=\tilde{S}(x^s)|_{[0,\infty)}$ for some $0<s<\infty$ where

\begin{enumerate}
\item[(i)] $\tilde{S}:\mathbb{C}\to\mathbb{C}$ is entire with $\tilde S^{(n)}(0)\ge 0$ for all $n\in\mathbb{N}_0$;
\item[(ii)] There exists a $0<\delta<\frac{1}{2}$ such that $\displaystyle{\limsup_{x\to\infty}\tilde{S}'(x)\tilde{S}(x)^{\delta-\frac{3}{2}}=0}$.
\end{enumerate}
We also define the (left-continuous) inverse of $S$ by $S^{-1}(y) := \inf\{\,x \ge 0 : S(x) \ge y\,\}$.
\end{definition}
We can show that this class of functions is quite large and that condition (ii) does not restrict the growth of $S$. 
\begin{proposition}\label{prop:widely-admissible-dominates}
    For any non-decreasing function $f:[0,\infty)\to[0,\infty)$ there exists a widely admissible $S$ such that $f(x)=o(S(x))$ as $x\to\infty$.
\end{proposition}
\begin{theorem}\label{thm:widely-admissible-uniqueness}
         Let $S$ be widely admissible. There exists a function $\varepsilon:\mathbb{R}\to\mathbb{R}$ with $\displaystyle{\lim_{t\to\pm\infty}}\varepsilon(t)=0$ such that if $(\Lambda, M)$ satisfies (\ref{eq:sequence-ordering}), 
\begin{align*}
    \sup_{j\in \mathbb{Z}}(1+\varepsilon(\lambda_j))\max\{S^{-1}(|\lambda_j|),1\}(\lambda_{j+1}-\lambda_j)&\leq \frac{1}{2},\quad\text{and}\\
    \sup_{j\in\mathbb{Z}}(1+\varepsilon(\mu_j))\max\{S(|\mu_j|),1\}(\mu_{j+1}-\mu_j)&\leq\frac{1}{2},
\end{align*}
then $(\Lambda,M)$ is a uniqueness pair for $\mathcal{H}$.
\end{theorem}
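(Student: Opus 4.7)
My plan is to adapt the Wirtinger-based strategy that proves Theorem~\ref{thm:uniform-critical-uniqueness}, amplifying it with the analytic structure of $S$ encoded in condition (i) to cover the widely asymmetric regime.

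Suppose toward a contradiction that a nonzero $f\in\mathcal H$ satisfies $f|_\Lambda=0$ and $\hat f|_M=0$. Applying Wirtinger's inequality on each gap $[\lambda_j,\lambda_{j+1}]$ with $f$ vanishing at one endpoint, then using the gap hypothesis to pull the weight outside the integral before summing, I expect to obtain
\[
\int_{\mathbb{R}}\max\{S^{-1}(|x|)^{2/s},1\}\,|f(x)|^2\,dx\;\le\;\frac{1}{\pi^2(1+\eta)^2}\,\|f'\|_2^2
\]
for some $\eta>0$; the symmetric argument on $M$ using $\hat f(\mu_j)=0$ gives the analogous estimate with $S(|\xi|^s)^2$ on the left and $\|\hat f\,'\|_2^2$ on the right. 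The Parseval identities $\|f'\|_2^2=4\pi^2\|\xi\hat f\|_2^2$ and $\|\hat f\,'\|_2^2=4\pi^2\|xf\|_2^2$ turn these into coupled weighted $L^2$ bounds $\|S^{-1}(|\cdot|)^{1/s}f\|_2 \lesssim \|\xi\hat f\|_2$ and $\|S(|\cdot|^s)\hat f\|_2\lesssim \|xf\|_2$.

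The decisive step is bootstrapping this pair into a fast pointwise decay for $\hat f$. Writing $S(t)=\sum_{n\ge 0}a_n t^n$ with $a_n\ge 0$, the lower bound $S(|\xi|^s)\ge a_n|\xi|^{sn}$ converts the Fourier-side inequality into moment estimates $\|\xi^{sn}\hat f\|_2\lesssim a_n^{-1}\|xf\|_2$ for every $n$. Iterating these through Parseval with the $\Lambda$-side bound, and summing in $n$ with the aid of condition (ii) $\log^2 S((1+\varepsilon)x)=o(S(x))$ to absorb the losses incurred at each scale, I expect to reach an estimate of the form $|\hat f(\xi)|\lesssim 1/S((1+\delta)|\xi|^s)$ for some $\delta>0$. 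The non-negativity of the Taylor coefficients of $S$ then extends $f$ to a function analytic in a horizontal strip with controlled growth on horizontal lines, and a Jensen-type density count applied to its zeros on $\Lambda$ — whose sparsity is bounded above by $S^{-1}(|\lambda|)^{1/s}/(1+\varepsilon(\lambda))$ — sits just inside the threshold compatible with the analytic growth of $f$, leaving no room but $f\equiv 0$.

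The main obstacle is the bootstrap. The Wirtinger constant $1/\pi^2(1+\eta)^2$ provides only a mild contraction, so the argument must amplify finitely many $O(1)$ gains per iterate into a global exponential bound; the function $\varepsilon(t)\to 0$ is the slack that enables this, and condition (ii) is the quantitative statement that the losses $\log^2 S((1+\varepsilon)x)$ introduced at each iteration are dwarfed by the primary $S(x)$ growth. A secondary technical issue is the $\max\{\cdot,1\}$ truncation near the origin, which introduces a finite-dimensional correction that must be peeled off carefully before passing to the analytic conclusion.
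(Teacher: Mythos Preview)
Your plan has a genuine gap at the bootstrap step. The $M$-side Wirtinger argument is correct and gives $\int S(|\xi|^s)^2|\hat f|^2\lesssim \int|x|^2|f|^2$ (note that no uniform $(1+\eta)^{-2}$ contraction is available, since $\varepsilon(\mu_j)\to 0$; the constant is at best~$1$). But the $\Lambda$-side gap Wirtinger yields only the single inequality $\int\max\{S^{-1}(|x|)^{2/s},1\}\,|f|^2\lesssim\|\xi\hat f\|_2^2$, which is far too weak to close a loop: $S^{-1}(|x|)^{1/s}\ll|x|$, so this does not control $\|xf\|_2$. Your ``iteration'' has no mechanism --- the moment bound $\|\xi^{sn}\hat f\|_2\le a_n^{-1}\|xf\|_2$ is already contained in the single $M$-inequality and adds nothing, and $L^2$ moment bounds cannot produce the pointwise decay $|\hat f(\xi)|\lesssim 1/S((1+\delta)|\xi|^s)$ without derivative control you do not have. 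The Jensen endgame is then built on this unproven decay; even granting it, you have not identified the growth class of $f$ in the strip, and a density count against a zero set $\Lambda$ of density $\sim S^{-1}(|x|)^{1/s}\to\infty$ requires a precise matching that is nowhere in sight.

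The paper's argument is entirely real-variable and hinges on obtaining a whole \emph{family} of inequalities on the $\Lambda$ side rather than one. It localises $f$ by a smoothed indicator $G_v$ (a $k$-fold self-convolution of short indicators, of width $\sim u$) and applies Lemma~\ref{lem:poincare-wirtinger-density} with $\Phi(t)=t^\theta$ for each $\theta\ge 1$; after integrating in $v$, Plancherel and a careful choice $u\sim\theta^2$, $k=\lceil\theta\rceil$ give
\[
\int_{|x|\ge 6u}S^{-1}(4e|x|)^{2\theta/s}|f(x)|^2\,dx\;\le\;2e\int_{\mathbb R}|\xi|^{2\theta}|\hat f(\xi)|^2\,d\xi\;+\;\text{error}_\theta.
\]
Setting $n=2\theta/s$, multiplying by $A_n/n!$ with $A_n=(S^2)^{(n)}(0)\ge 0$, and summing in $n$ reconstructs $S^2(S^{-1}(4e|x|))=(4e|x|)^2$ on the left and $S^2(|\xi|^s)$ on the right, producing $2\int|x|^2|f|^2\le\int(S^2(|\xi|^s)+C_{S,s})|\hat f|^2$. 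Condition~(ii) is used precisely once, in a Cauchy-estimate/root-test argument showing the error series $\sum_n\tfrac{A_n}{n!}\bigl(S^{-1}(Cn^2)\bigr)^n$ converges --- not to ``absorb iteration losses''. Combining with your $M$-Wirtinger gives $2\int|x|^2|f|^2\le\int|x|^2|f|^2$, hence $f\equiv 0$. No analytic continuation, no Jensen.
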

In contrast to previous results, we can make these uniqueness pairs arbitrarily asymmetric by Proposition~\ref{prop:widely-admissible-dominates}. 
\begin{remark}
    By a similar calculation as in the proof of Corollary~\ref{cor:highly-asymmetric-pairs}, we get uniqueness pairs of the form 
\[
\lambda_j\sim sign(j)\frac{\log|j|}{4\pi}\quad\text{and}\quad\mu_j\sim\frac{2\pi j}{\log |j|}\sim sign(j)\gamma_j
\]
as $j\to\pm\infty$. We thus have infinitely many uniqueness pairs that asymptotically match those in \cite{Bondarenko2023}. 
\end{remark}
\begin{remark}
Certain choices of $S$ are both subexponentially admissible and widely admissible. In these cases, we may apply either Theorem~\ref{thm:uniform-critical-uniqueness} or Theorem~\ref{thm:widely-admissible-uniqueness}. Theorem~\ref{thm:uniform-critical-uniqueness}, however, provides a more explicit characterization of $\varepsilon(t)$, namely $\varepsilon_1(\lambda)=\frac{S(|\lambda|+d)}{\max\{S(|\lambda|),1\}}-1$ and $\varepsilon_2(\mu)=\frac{S^{-1}(|\mu|+d)}{\max\{S^{-1}(|\mu|),1\}}-1$. Moreover, Theorem~\ref{thm:uniform-critical-uniqueness} allows us to choose $S$ under weaker smoothness assumptions than Theorem~\ref{thm:widely-admissible-uniqueness}.
\end{remark}
We can also get a result similar to Corollary~\ref{cor:finite-dim-vanishing-space}.
\begin{corollary}\label{cor:finite-dim-widely-admissible}
        Let $S$ or $S^{-1}$ be widely admissible. Then the space of functions in $\mathcal{H}$ vanishing on a pair $(\Lambda, M)$ satisfying (\ref{eq:sequence-ordering}),
\begin{align*}
    \limsup_{j\in \mathbb{Z}}S^{-1}(|\lambda_j|)(\lambda_{j+1}-\lambda_j)<\frac{1}{2},\quad\text{and}\quad
    \limsup_{j\in\mathbb{Z}}S(|\mu_j|)(\mu_{j+1}-\mu_j)<\frac{1}{2}
\end{align*}
is finite-dimensional.
\end{corollary}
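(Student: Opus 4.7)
The plan is to deduce the corollary from Theorem~\ref{thm:widely-admissible-uniqueness} by augmenting $(\Lambda,M)$ with a finite set of extra points so that the enlarged pair satisfies the hypotheses of that theorem. By the Fourier symmetry of $\mathcal{H}$ (which swaps the two spacing conditions upon interchanging $\Lambda$ and $M$), I will assume without loss of generality that $S$, rather than $S^{-1}$, is widely admissible, and write $V$ for the vanishing space in question.

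First I would pass from the $\limsup<\tfrac12$ hypothesis to a uniform tail estimate. Pick $\delta>0$ so that both limsups are at most $\tfrac12-2\delta$, and choose $N$ large enough that the corresponding inequalities hold for all $|j|\geq N$. Letting $\varepsilon$ be the function delivered by Theorem~\ref{thm:widely-admissible-uniqueness} for our $S$ and $s$, the decay $\varepsilon(t)\to 0$ at infinity lets me enlarge $N$ so that $(1+\varepsilon(\lambda_j))$ and $(1+\varepsilon(\mu_j))$ are close enough to $1$ that the theorem's hypotheses hold for every $|j|\geq N$; here the factor $\max\{\,\cdot\,,1\}$ in Theorem~\ref{thm:widely-admissible-uniqueness} is automatically absorbed because $S^{-1}(|\lambda_j|)^{1/s}\to\infty$ with $|j|$.

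Next, on the compact interval $I=[-R,R]$ with $R>\max\{|\lambda_{\pm N}|,|\mu_{\pm N}|\}$, the two weights
\[
(1+\varepsilon(x))\max\{S^{-1}(|x|)^{1/s},1\}\quad\text{and}\quad(1+\varepsilon(x))S(|x|^s)
\]
are bounded by some constant $C$. I insert finitely many additional points into $\Lambda$ and $M$ inside $I$ so that every consecutive gap there is at most $1/(2C)$, and call the augmented pair $(\Lambda',M')$. By construction it satisfies the hypotheses of Theorem~\ref{thm:widely-admissible-uniqueness} for the same $S$, $s$, and $\varepsilon$, so $(\Lambda',M')$ is a uniqueness pair for $\mathcal{H}$.

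The conclusion then follows because the linear map
\[
V\longrightarrow\mathbb{C}^{|\Lambda'\setminus\Lambda|}\oplus\mathbb{C}^{|M'\setminus M|},\qquad f\mapsto\bigl((f(\lambda))_{\lambda\in\Lambda'\setminus\Lambda},(\hat f(\mu))_{\mu\in M'\setminus M}\bigr),
\]
is injective: any element of its kernel vanishes on $(\Lambda',M')$ and is therefore zero. The main obstacle I anticipate is in the middle step: Theorem~\ref{thm:widely-admissible-uniqueness} only asserts $\varepsilon(t)\to 0$ at infinity, so bounding $\varepsilon$ on the compact set $I$ either requires extracting local boundedness from the construction in the proof of that theorem (where $\varepsilon$ should be continuous by construction), or replacing $\varepsilon$ on $I$ by a larger, explicitly bounded majorant before invoking the theorem.
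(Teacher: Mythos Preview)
The paper does not supply a proof of this corollary; it is stated as an immediate consequence of Theorem~\ref{thm:widely-admissible-uniqueness}. Your strategy---augment $(\Lambda,M)$ by finitely many points so that Theorem~\ref{thm:widely-admissible-uniqueness} applies to the enlarged pair, then observe that the original vanishing space injects into the finite-dimensional space of evaluations at the added points---is the intended argument and is correct when $S$ itself is widely admissible. Your diagnosis of the $\varepsilon$-boundedness obstacle is also right, as is your proposed fix: the proof of Theorem~\ref{thm:widely-admissible-uniqueness} shows that one may work directly with the explicit weights $S^{-1}(8e\max\{|\lambda_j|,|\lambda_{j+1}|\})^{1/s}$ and $S(\max\{|\mu_j|,|\mu_{j+1}|\}^{s})+\sqrt{C_{S,s}}$, which are continuous and hence bounded on any compact interval.

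There is, however, a genuine gap in your Fourier-symmetry reduction. Interchanging $\Lambda$ and $M$ does \emph{not} swap the two spacing conditions of the corollary, because $s$ enters asymmetrically: as an outer exponent in the $\Lambda$-condition but inside the argument in the $M$-condition. After the swap, the pair $(M,\Lambda)$ carries the density weights $y\mapsto R^{-1}(y^{s})$ and $x\mapsto R(x)^{1/s}$ with $R:=S^{-1}$, whereas Theorem~\ref{thm:widely-admissible-uniqueness} applied with the widely admissible function $R$ and a parameter $s'$ yields the weights $y\mapsto R^{-1}(y)^{1/s'}$ and $x\mapsto R(x^{s'})$. Since each pair consists of mutually inverse functions, matching them asymptotically forces $R^{-1}(y^{s})\sim R^{-1}(y)^{1/s'}$ for large $y$, which for a generic widely admissible $R$ (take $R(x)=e^{x}$, so $R^{-1}=\log$) holds only when $s=s'=1$. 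Thus the clause ``$S^{-1}$ widely admissible'' with $s\neq 1$ cannot be disposed of by a bare swap of $\Lambda$ and $M$; it requires either rerunning the proof of Theorem~\ref{thm:widely-admissible-uniqueness} with the roles of the two sequences genuinely exchanged, or exhibiting a different widely admissible $\tilde S$ and parameter $\tilde s$ adapted to the swapped weights.
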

If this space is not just finite-dimensional, but trivial, one would again get a result analogous to Theorem~\ref{thm:kns-uniqueness}.
\subsection{Stable uniqueness and interpolation}
We define the Hilbert space $H( S)\subset L^2$ with the norm
\begin{align*}
    \|f\|_{H(S)}^2:=\int_\mathbb{R}(1+x^2)|f(x)|^2\,dx+\int_\mathbb{R}(1+S^2(|\xi|))|\hat{f}(\xi)|^2\,d\xi.
\end{align*}
Giving up critical density, we can quantify the uniqueness in terms of the following frame bounds.
\begin{theorem}\label{thm:main-sampling-inequality}
    Let $S$ be widely admissible and $0<\underline{\alpha}\leq\overline{\alpha}<\frac{1}{2}$. Then there exist constants $K_{\overline{\alpha},S},C>0$ such that if $(\Lambda,M)$ satisfies (\ref{eq:sequence-ordering}) and
    \begin{align*}
        \underline{\alpha}\leq\max\{S^{-1}(|\lambda_j|),K_{\overline{\alpha},S}\}(\lambda_{j+1}-\lambda_j),\,\max\{S(|\mu_j|),K_{\overline{\alpha},S}\}(\mu_{j+1}-\mu_j)\leq \overline{\alpha}\quad\text{for all $j\in\mathbb{Z}$},
    \end{align*}
    then for any $f\in\mathcal{H}$, we have
    \begin{align*}
    C^{-1}\|f\|_{H(S)}^2
    &\leq \sum_{j\in\mathbb{Z}}(\lambda_{j+1}-\lambda_j)(1+\lambda_j^2)|f(\lambda_j)|^2+\sum_{j\in\mathbb{Z}}(\mu_{j+1}-\mu_j)(1+S^2(|\mu_j|))|\hat{f}(\mu_j)|^2\\
    &\leq C\|f\|_{H(S)}^2.
    \end{align*}
\end{theorem}
One can view these frame bounds as natural discretizations of the integrals defining the norms $\|f\|^2_{H( S)}$.

It is well-known that any such
frame bounds give rise to an interpolation formula, see e.g. \cite[Section 5.1]{MR1843717}, \cite[Section 1.8]{MR3468930} or \cite{kulikov2025}.
\begin{corollary}\label{cor:interpolation-widely-admissible}
Let $S$ be widely admissible and $0<\alpha<\frac{1}{2}$. Then there exists a constant $K_{\alpha,S}>0$ such that if $(\Lambda,M)$ satisfies (\ref{eq:sequence-ordering}) and
    \begin{align*}
        \sup_{j\in\mathbb{Z}}\max\{S^{-1}(|\lambda_j|),K_{\alpha,S}\}(\lambda_{j+1}-\lambda_j),\,\sup_{j\in\mathbb{Z}}\max\{S(|\mu_j|),K_{\alpha,S}\}(\mu_{j+1}-\mu_j)\leq \alpha
    \end{align*}
    then $(\Lambda,M)$ is a Fourier interpolation pair for $H(S)$. In particular, this means that for all $f\in H(S)$ there exist functions $\{a_\lambda\}_{\lambda\in\Lambda},\{b_\mu\}_{\mu\in M}\subset H(S)$ such that 
    \[
    f=\sum_{\lambda\in\Lambda}a_\lambda f(\lambda)+\sum_{\mu\in M}b_\mu \hat{f}(\mu),
    \]
    where the right-hand side converges in $H(S)$.
\end{corollary}
\begin{remark}
    Let 
    \[
    \mathfrak{S}:=\{\text{$S$ is widely admissible}\}.
    \]
    Then by Proposition~\ref{prop:widely-admissible-dominates}, we know that $f\in \displaystyle{\bigcap_{ S\in\mathfrak{S}}H(S)}$ if and only if 
    \[
    f(x)\in PW_\sigma^2\cap L^2(\mathbb{R}, (1+x^2)\,dx)
    \] for some $\sigma>0$, meaning that the limiting case of Corollary~\ref{cor:interpolation-widely-admissible} is essentially a special case of Beurling--Landau sampling in a  Paley--Wiener space $PW_\sigma^2$. 
\end{remark}

\begin{remark}
    Note that Corollary~\ref{cor:interpolation-widely-admissible} does not imply the existence of new crystalline measures that are not covered by Theorem 2 in \cite{kulikov2025}. Indeed, if $S$ grows faster than any polynomial, then $\mathcal{S}$ is not a subspace of $H(S)$. Similarly, to obtain interpolation formulas that asymptotically match those in \cite{Bondarenko2023}, we must require the function to be holomorphic in a strip around $\mathbb{R}$, since the growth of $S$ forces $\hat{f}$ to decay exponentially.
\end{remark}

We know that Corollary~\ref{cor:interpolation-widely-admissible} and consequently Theorem~\ref{thm:main-sampling-inequality} are asymptotically optimal since Proposition~\ref{prop:kulikov-density-condition} combined with Kulikov's condition (\ref{eq:kulikov-interpolation-condition}) implies that there are no interpolation pairs $(\Lambda, M)$ for the spaces $H(S)$ where $S:[0,\infty)\to[0,\infty)$ is convex and increasing that satisfy (\ref{eq:sequence-ordering}), 
\begin{align*}
    \liminf_{j\to \pm\infty}S^{-1}(|\lambda_j|)(\lambda_{j+1}-\lambda_j)> \frac{1}{2},\quad\text{and}\quad   \liminf_{j\to\pm\infty}S(|\mu_j|)(\mu_{j+1}-\mu_j)>\frac{1}{2}.
\end{align*}

\section{Proof of Proposition~\ref{prop:kulikov-density-condition}}
\begin{lemma}\label{lem:fenchel-young-integral}
     Let $S(\lambda):[0,\infty)\to[0,\infty)$ be an unbounded, increasing, continuously differentiable function with $S(0)=0$. Then
     \[
     \int_0^TS(\lambda)\,d\lambda+\int_0^WS^{-1}(\mu)\,d\mu\geq WT
     \]
     with equality if and only if $W=S(T)$.
\end{lemma}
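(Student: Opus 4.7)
The plan is to recognize this as a version of the classical Young integral inequality and prove it by a direct one-variable calculus argument. Fix $T\geq 0$ and define
\[
g(W) := \int_0^T S(\lambda)\,d\lambda + \int_0^W S^{-1}(\mu)\,d\mu - WT.
\]
Since $S$ is continuous, (strictly) increasing, and unbounded with $S(0)=0$, it maps $[0,\infty)$ bijectively onto $[0,\infty)$, and its inverse $S^{-1}:[0,\infty)\to[0,\infty)$ is continuous and strictly increasing. By the fundamental theorem of calculus, $g$ is differentiable with $g'(W)=S^{-1}(W)-T$, which vanishes precisely at $W=S(T)$, is negative on $[0,S(T))$, and is positive on $(S(T),\infty)$. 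Hence $g$ attains its unique global minimum at $W=S(T)$.

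I would then verify that $g(S(T))=0$. Integration by parts gives
\[
\int_0^T S(\lambda)\,d\lambda \;=\; T\,S(T) \;-\; \int_0^T \lambda\,S'(\lambda)\,d\lambda,
\]
and the substitution $\mu = S(\lambda)$, $d\mu = S'(\lambda)\,d\lambda$ converts $\int_0^T \lambda\,S'(\lambda)\,d\lambda$ into $\int_0^{S(T)} S^{-1}(\mu)\,d\mu$. Combining these two identities yields $g(S(T))=0$. Together with the monotonicity analysis above, this gives $g(W)\geq 0$ for all $W\geq 0$ with equality if and only if $W=S(T)$, which is exactly the claim. (The same identity admits a transparent geometric reading: the two integrals partition the rectangle $[0,T]\times[0,S(T)]$ along the graph of $S$, so they add up to $T\,S(T)$ exactly when the graph reaches the height $W$ at $\lambda = T$, and otherwise overshoot the rectangle.)

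There is essentially no obstacle; the argument is classical. The only small technical point to address is the legitimacy of the substitution $\mu = S(\lambda)$. This is immediate from the assumption that $S\in C^1$ and is an increasing bijection of $[0,\infty)$; any points where $S'$ may vanish contribute measure zero and do not affect the substitution formula, so one can equivalently do the change of variables on $\{\lambda:S'(\lambda)>0\}$ and extend by continuity.
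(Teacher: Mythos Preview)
Your proof is correct. It is essentially the classical proof of Young's integral inequality: freeze $T$, minimize in $W$ using $g'(W)=S^{-1}(W)-T$, and evaluate the minimum via integration by parts and the substitution $\mu=S(\lambda)$.

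The paper takes a slightly different route: it sets $f(T)=\int_0^T S$ and $g(W)=\int_0^W S^{-1}$, checks that $g=f^*$ is the Legendre--Fenchel conjugate of $f$ (by computing $(f^*)'=S^{-1}$), and then invokes the Fenchel--Young inequality $f(T)+f^*(W)\ge WT$ with equality iff $W=f'(T)=S(T)$. Conceptually the two arguments are the same---your one-variable minimization is exactly what underlies Fenchel--Young in the smooth strictly convex case---but your version is more self-contained and avoids any appeal to convex-analysis terminology, while the paper's version makes the convex-duality structure explicit. Your integration-by-parts computation of $g(S(T))=0$ is precisely the verification that $f$ and $g$ are conjugate, just phrased differently.
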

\begin{proof}
    We just need to show that 
    \[
    g(W):=\int_0^WS^{-1}(\mu)\,d\mu\quad\text{and}\quad f(T):=\int_0^TS(\lambda)\,d\lambda
    \]
    are convex conjugates. Then the rest follows from the Fenchel--Young inequality.

    Since $f$ is strictly convex and differentiable, we can find the convex conjugate by
    \[
    f^*(y)=yx(y)-f(x(y)),
    \]
    where $f'(x(y))=y$. Since $f'(y)=S(y)$, we know that $x(y)=S^{-1}(y)$. Hence, 
    \[
    f^*(y)=yS^{-1}(y)-f(S^{-1}(y)).
    \]
    Differentiating we get
    \[
    (f^*)'(y)=y(S^{-1})'(y)+S^{-1}(y)-(S^{-1})'(y)S(S^{-1}(y))=S^{-1}(y).
    \]
    So, 
    \[
    f^*(W)=\int_0^WS^{-1}(\mu)\,d\mu+C,
    \]
    but $f^{*}(0)=0$ so $C=0$ and thus $f^*=g$.
\end{proof}
We will now use our Fenchel--Young inequality to prove Proposition~\ref{prop:kulikov-density-condition}.
\begin{proof}[Proof of Proposition~\ref{prop:kulikov-density-condition}]
    Let
    \begin{align*}
        f(T):=\sum_{j\in\mathbb{Z}}\frac{T-\min\{|\lambda_j|,|\lambda_{j+1}|\}}{\lambda_{j+1}-\lambda_j}\mathbbm{1}_{I_j}(T)+|\Lambda\cap[-T,T]|,
    \end{align*}
    where $I_j=[\min\{[|\lambda_j|,|\lambda_{j+1}|\}, \max\{[|\lambda_j|,|\lambda_{j+1}|\}]$. Since $\lambda_{j+1}-\lambda_j\to0$ as $j\to\pm\infty$, we know that for any $\varepsilon>0$, there exists a $T_0\geq 0$ such that
    \[
    f\left(\frac{T}{1+\varepsilon}\right)\leq|\Lambda\cap[-T,T]|\leq f(T(1+\varepsilon))
    \]
    for all $T\geq T_0$. We know that $f$ is continuous, and if $\lambda_m< T< \lambda_{m+1}$ and $\lambda_n<-T< \lambda_{n+1}$, then
    \[
    f'(T)=\frac{1}{\lambda_{m+1}-\lambda_m}+\frac{1}{\lambda_{n+1}-\lambda_n}.
    \]
    Since
    \[
    \alpha^{-1}S(|\lambda_j|)(\lambda_{j+1}-\lambda_j)\to1
    \]
    as $j\to\pm\infty$, we know that for any $\varepsilon>0$, we have
    \[
    (2\alpha^{-1}+o(1))\int_0^\frac{T}{1+\varepsilon}S(\lambda)\,d\lambda\leq f(T)\leq(2\alpha^{-1}+o(1))\int_0^{T(1+\varepsilon)}S(\lambda)\,d\lambda.
    \]
    Hence, for any $\varepsilon>0$, we have
    \[
    (2\alpha^{-1}+o(1))\int_0^\frac{T}{1+\varepsilon}S(\lambda)\,d\lambda\leq |\Lambda\cap[-T,T]|\leq(2\alpha^{-1}+o(1))\int_0^{T(1+\varepsilon)}S(\lambda)\,d\lambda.
    \]
    for $T$ sufficiently large. Similarly, for $M$, we have
    \[
    (2\alpha^{-1}+o(1))\int_0^\frac{W}{1+\varepsilon}S^{-1}(\mu)\,d\mu\leq |M\cap[-W,W]|\leq(2\alpha^{-1}+o(1))\int_0^{W(1+\varepsilon)}S^{-1}(\mu)\,d\mu
    \]
    for $W$ sufficiently large. Consequently, we know that 
    \[
    \inf_{W,T\geq K}\frac{|\Lambda\cap[-T,T]|+|M\cap[-W,W]|}{4WT}\sim\inf_{W,T\geq K}\frac{2\alpha^{-1}\int_0^TS(\lambda)\,d\lambda+2\alpha^{-1}\int_0^WS^{-1}(\mu)\,d\mu}{4WT}
    \]
    as $K\to\infty$. Finally, by Lemma~\ref{lem:fenchel-young-integral}, we know that
    \[
    \inf_{W,T\geq K}\frac{2\alpha^{-1}\int_0^TS(\lambda)\,d\lambda+2\alpha^{-1}\int_0^WS^{-1}(\mu)\,d\mu}{4WT}\to\frac{1}{2\alpha}
    \]
    as $K\to\infty$.
\end{proof}
\section{Proof of Proposition~\ref{prop:widely-admissible-dominates}}
This is deceptively difficult as one has to stop the appearance of an unbounded sequence $\{x_k\}_{k\in\mathbb{N}}$ where $\frac{S'(x_k)}{S(x_k)^{\frac{3}{2}-\delta}}$ has arbitrarily large and thin "spikes". In fact for any increasing function $\varphi:[0,\infty)\to(0,\infty)$, we can construct an entire function $S$ with non-negative Maclaurin coefficients such that there exists a sequence $\{x_k\}_{k\in\mathbb{N}}$ where $\lim_{k\to\infty}x_k=\infty$ and $\limsup_{k\to\infty}\frac{S'(x_k)}{\varphi(x_k)}=\infty$.

We begin by constructing a larger function which is entire using standard complex analytic methods. We then construct an even larger function satisfying condition (ii) through a dyadic decomposition of the Maclaurin coefficients.
\begin{proof}[Proof of Proposition~\ref{prop:widely-admissible-dominates}]
   By the Mittag--Leffler theorem, we can construct a meromorphic function $\tilde{g}(z)$ having simple poles at each $z = n \in \mathbb{N}_0$ for which $f(n+1) \neq 0$, with residue $f(n+1)$ at $z = n$. We now define $g(z):=\sin(\pi z)\tilde{g}(z)$. We now have $g(n)=f(n+1)$ for all $n\in\mathbb{N}_0$ by construction. 
   
   We let $a_n:=\frac{g^{(n)}(0)}{n!}$ be the Maclaurin coefficients of $g$. Let now $k(n):=\lfloor\frac{\log n}{\log2}\rfloor$ and $h(x):=\sum_{n=0}^\infty b_n x^n$ where 
    \[
    b_n :=
    \begin{cases}
    \displaystyle \max_{j\geq2^{k(n)}} |a_j|, & n > 0, \\[1em]
    \displaystyle \max_{j \ge 0} |a_j|, & n = 0.
    \end{cases}
    \]

    We know that 
    \[
    \sup_{j\geq n}\sqrt[j]{|b_j|}\leq\sup_{j\geq \frac{n}{2}}\sqrt[j]{|a_j|}\to 0\quad\text{as $n\to\infty$,}
    \]
     so $h$ is entire by the Cauchy--Hadamard theorem. We also know that $h(x)$ and $f(x)$ are non-decreasing so since $h(n)\geq g(n)=f(n+1)$ for all $n\in\mathbb{N}_0$, we have $h(x)\geq h(\lfloor x\rfloor)\geq f(\lceil x\rceil)\geq f(x)$ when $x\geq0$.
    
    Now let $\tilde{h}(x):=\sum_{n=0}^\infty c_n x^n$ where 
    \[
    c_n :=
    \begin{cases}
    \displaystyle \max_{j\geq0}\sqrt[2^j]{2^{(2^j-1)(k(n)+5)-j}b_{2^{k(n)+j}}}, & n > 0, \\ 
    \max\{b_0,\sqrt{c_1}\}, & n = 0,
    \end{cases}
    \]
    we notice that $\tilde{h}(x)$ is also entire by the Cauchy--Hadamard theorem. We now have $c_n^2\geq 2^{k(n)+4} c_{2n}\geq 8nc_{2n}$ for all $n>0$. Using the Cauchy product of $\tilde{h}(x)^2$, we see that
    \[
    \tilde{h}(x)^2=\sum_{n=0}^\infty x^n\sum_{k=0}^\infty c_{n-k}c_k\geq c_1+\sum_{n=1}^\infty c_n^2x^{2n}\geq c_1+\sum_{n=1}^\infty 8n c_{2n}x^{2n}.
    \]
    Since $c_n=c_j$ for all $2^{k(n)}\leq j<2^{k(n)+1}$, we know that $c_{2n}=c_{2n+1}$ for all $n\geq1$ since they are in the same dyadic block. Hence, we know that $8nc_{2n}= 4n(c_{2n}+c_{2n+1})\geq 2n c_{2n}+(2n+1)c_{2n+1}$ for all $n>0$. Consequently,
    \[
    \tilde{h}(x)^2\geq c_1+ \sum_{n=1}^\infty (2nc_{2n}+(2n+1)c_{2n+1})x^{2n}\geq \tilde{h}'(x)
    \]
    for all $x\geq1$. Hence, $\displaystyle{\limsup_{x\to\infty}}\,\tilde{h}'(x)\tilde{h}(x)^{-2}\leq 1$.
    
    Consider now $S(x):=e^{\tilde{h}(x)}$. Then 
    \[
    \limsup_{x\to\infty}\frac{S'(x)}{S(x)^{\frac{3}{2}-\delta}}= \limsup_{x\to\infty}\frac{\tilde{h}'(x)}{e^{(\frac{1}{2}-\delta)\tilde{h}(x)}}=\limsup_{x\to\infty}\frac{\tilde{h}'(x)}{\tilde{h}(x)^2}\frac{\tilde{h}(x)^2}{e^{(\frac{1}{2}-\delta)\tilde{h}(x)}}\leq \limsup_{x\to\infty}\frac{\tilde{h}(x)^2}{e^{(\frac{1}{2}-\delta)\tilde{h}(x)}}=0.
    \]
    So $S$ is widely admissible, $\tilde{h}(x)\geq h(x)\geq f(x)$ when $x\geq0$, and $\tilde{h}(x)=o(S(x))$ as $\displaystyle{x\to\infty}$.
\end{proof}
\section{Proof of Theorem~\ref{thm:uniform-critical-uniqueness}}
Our methods are inspired by a combination of techniques used by Kulikov, Nazarov, and Sodin~\cite{kulikov2025}. However, due to our uniform density assumptions, we do not require the use of complex analytic arguments.
\begin{definitionx}\label{def:ell-dense-set}
    Let $\Gamma\subset\mathbb{R}$. If $\mathbb{R}\backslash\Gamma$ contains no interval of length greater than $\ell$, we call $\Gamma$ an \emph{$\ell$-dense set}.
\end{definitionx}

We will use the following version of the well-known Poincar\'{e}--Wirtinger inequality to prove Theorem~\ref{thm:uniform-critical-uniqueness}.
\begin{lemmax}[Lemma 3.1(ii) in \cite{kulikov2025}]\label{lem:poincare-wirtinger-density}
 Let $t > 0$, and let $\Gamma$ be a $(2t)^{-1}$-dense discrete subset of $\mathbb{R}$. Then, for all convex increasing $C^1$-functions $\Phi \colon [0,\infty) \to \mathbb{R}$ and all functions $f \in \mathcal{H}$ vanishing on $\Gamma$, we have
\[
\Phi(t^2) \int_{\mathbb{R}} |f(x)|^2 \, dx 
\leq \int_{\mathbb{R}} \Phi(\xi^2) |\hat{f}(\xi)|^2 \, d\xi .
\]
\end{lemmax}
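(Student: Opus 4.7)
The plan is to reduce everything to the base case $\Phi(y)=y$, which follows by applying the classical one-dimensional Dirichlet Poincaré--Wirtinger inequality on each interval between consecutive points of $\Gamma$, and then to lift this base case to general $\Phi$ using the integral representation of convex increasing functions.

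First I would prove the case $\Phi(y)=y$, i.e.\ $t^{2}\|f\|_{2}^{2}\leq \int_{\mathbb R}\xi^{2}|\hat f(\xi)|^{2}\,d\xi$. Write $\Gamma$ as an ordered (possibly bi-infinite) sequence of points and let $I=[a,b]$ be any interval bounded by two consecutive points of $\Gamma$. The $(2t)^{-1}$-density hypothesis gives $b-a\leq (2t)^{-1}$, and since $f\in\mathcal H$ is absolutely continuous with $f(a)=f(b)=0$, the classical Dirichlet Poincaré--Wirtinger inequality yields
\[
\int_{a}^{b}|f|^{2}\,dx\;\leq\;\frac{(b-a)^{2}}{\pi^{2}}\int_{a}^{b}|f'|^{2}\,dx\;\leq\;\frac{1}{4\pi^{2}t^{2}}\int_{a}^{b}|f'|^{2}\,dx.
\]
Summing over all such intervals (which tile $\mathbb R$ up to the discrete set $\Gamma$) and applying Plancherel with the identity $\|f'\|_{2}^{2}=4\pi^{2}\int\xi^{2}|\hat f|^{2}\,d\xi$ gives the base case. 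One should verify that $f\in\mathcal H$ guarantees enough regularity for Poincaré--Wirtinger to apply termwise and that the sum over intervals is justified; this is standard since $f\in H^{1}(\mathbb R)$ and $\Gamma$ is discrete.

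To upgrade to general convex increasing $C^{1}$ functions $\Phi\colon[0,\infty)\to\mathbb R$, I would use the representation
\[
\Phi(y)\;=\;\Phi(0)+\Phi'(0)\,y+\int_{0}^{\infty}(y-s)_{+}\,d\mu(s),\qquad y\geq 0,
\]
where $\mu$ is the Lebesgue--Stieltjes measure of the increasing function $\Phi'$. Applying this with $y=\xi^{2}$ and integrating against $|\hat f(\xi)|^{2}\,d\xi$, and with $y=t^{2}$ against $\|f\|_{2}^{2}$, Plancherel collapses the $\Phi(0)$ terms and the base case handles the $\Phi'(0)$ terms. It then suffices, by Fubini, to establish for every $s\geq 0$ the pointwise inequality
\[
(t^{2}-s)_{+}\int_{\mathbb R}|f|^{2}\,dx\;\leq\;\int_{\mathbb R}(\xi^{2}-s)_{+}|\hat f(\xi)|^{2}\,d\xi.
\]
When $s\geq t^{2}$ this is trivial. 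When $s<t^{2}$, Plancherel together with the base case gives
\[
\int_{\mathbb R}(\xi^{2}-s)_{+}|\hat f|^{2}\,d\xi\;\geq\;\int_{\mathbb R}(\xi^{2}-s)|\hat f|^{2}\,d\xi\;=\;\int\xi^{2}|\hat f|^{2}\,d\xi-s\|f\|_{2}^{2}\;\geq\;(t^{2}-s)\|f\|_{2}^{2}.
\]

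The only real obstacle is handling possible non-integrability: if $\int\Phi(\xi^{2})|\hat f|^{2}\,d\xi=\infty$ the inequality is vacuous, so we may assume it is finite, after which the integral representation of $\Phi$ combined with Tonelli makes the exchange of integration in the decomposition step rigorous. Everything else is bookkeeping; the conceptual content is entirely in the elementary Dirichlet Poincaré--Wirtinger step applied interval-by-interval.
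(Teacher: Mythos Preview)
The paper does not prove this lemma; it is quoted as a black box from Kulikov--Nazarov--Sodin (the bracketed attribution in the lemma heading signals exactly this), so there is no in-paper proof to compare your attempt against.

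That said, your argument is correct and is the standard route. The interval-by-interval Dirichlet Poincar\'e--Wirtinger bound with constant $(b-a)^2/\pi^2$ gives the linear case $\Phi(y)=y$ exactly as you wrote; indeed, the paper itself uses this same computation explicitly later in the proof of Theorem~\ref{thm:widely-admissible-uniqueness}. Your lift to general convex increasing $\Phi$ via the representation $\Phi(y)=\Phi(0)+\Phi'(0)y+\int_0^\infty(y-s)_+\,d\mu(s)$ and the elementary verification of $(t^2-s)_+\|f\|_2^2\le\int(\xi^2-s)_+|\hat f|^2\,d\xi$ is also correct. The only small remark is that the $(2t)^{-1}$-density hypothesis (Definition~\ref{def:ell-dense-set}) says no complementary interval has length \emph{greater} than $(2t)^{-1}$, so $b-a\le(2t)^{-1}$ is a non-strict inequality, which is what you used; and one should note that the intervals tile $\mathbb R$ because $\Gamma$ is unbounded in both directions (otherwise an unbounded complementary interval would violate the density condition). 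These are cosmetic points; the proof is sound.
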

Before stating our next result, we define the functions
\begin{align*}
    \sigma_{\Phi, H}(t)&:=\sqrt{
\Phi^{-1}\left(\int_\mathbb{R}|\hat{H}(\eta)|^2\Phi((\max\{|t+2|,4\}+|\eta|)^2)\,d\eta\right)}\text{ and}\\
S_{\Phi,\Psi}(t)&:=\sqrt{\Phi^{-1}\circ\Psi(t^2)},
\end{align*}
where $\Psi$ and $\Phi$ are convex, increasing functions and $H\in\mathcal{S}$. The following lemma can be viewed as a kind of fractional, global Poincar\'{e}--Wirtinger inequality.
\begin{lemma}\label{lem:fractional-poincare-wirtinger}
Let $F\in\mathcal{S}$ and $G\in C^\infty_c(-1,1)$ with $\|G\|_{L^2}=1$. If $f$ vanishes on the set $\Lambda=\{\lambda_j\}_{j\in\mathbb{Z}}$ and
\begin{align*}
\sup_{j\in\mathbb{Z}} S_{\Phi,\Psi}(\sigma_{\Psi,F}(|\lambda_j|))(\lambda_{j+1}-\lambda_j)&\leq \frac{1}{2},
\end{align*}
 then we have the inequality
 \[
 \iint_{\mathbb{R}\times\mathbb{R}}\Psi((x+y)^2)|f(x)|^2|\hat{F}(y)|^2\,dy\,dx\leq\iint_{\mathbb{R}\times\mathbb{R}}\Phi((\xi+\eta)^2)|\hat{f}(\xi)|^2|\hat{G}(\eta)|^2\,d\xi\,d\eta.
 \] 
 Additionally, assuming the right-hand side is finite, we have equality if and only if $f\equiv 0$.
\end{lemma}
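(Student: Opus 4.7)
The plan is to prove the stated inequality by applying Lemma~\ref{lem:poincare-wirtinger-density} to a judiciously chosen family of localized, modulated copies of $f$ and then averaging. Three operations on $f$ are combined: (i) \emph{modulation} $f(x)\mapsto f(x)e^{2\pi i\eta x}$, which translates $\hat f$ by $\eta$ and sets up the $|\hat G|^2$-weighted average on the right-hand side; (ii) \emph{spatial localization} $f\mapsto f(x)\psi(x-x_0)$ by a compactly supported cutoff, which makes the density requirement in Lemma~\ref{lem:poincare-wirtinger-density} local, at an $x_0$-dependent scale $t(x_0)$; and (iii) \emph{integration over $x_0$}, which reassembles the $|\hat F|^2$-weighted left-hand side. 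The ornate definitions of $\sigma_{\Psi,F}$ and $S_{\Phi,\Psi}$ are exactly what is needed so that the local scale produced in (ii) matches the target weight after integrating in $x_0$.

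\textbf{Setup.} I would pick a real, even cutoff $\psi\in C^\infty_c$ of width slightly greater than $1$ with $\|\psi\|_{L^2}=1$, and for $(x_0,\eta)\in\mathbb{R}^2$ set
\[
g_{x_0,\eta}(x)\;:=\;f(x)\,\psi(x-x_0)\,e^{2\pi i\eta x}.
\]
Because $\psi(\cdot-x_0)$ has compact support and $f$ vanishes on $\Lambda$, $g_{x_0,\eta}$ vanishes on $\Lambda\cup(\mathbb{R}\setminus\operatorname{supp}\psi(\cdot-x_0))$. Using the hypothesis together with monotonicity of $S_{\Phi,\Psi}\circ\sigma_{\Psi,F}$, this vanishing set is $(2t(x_0))^{-1}$-dense for $t(x_0):=S_{\Phi,\Psi}(\sigma_{\Psi,F}(|x_0|))$; the offsets $|t+2|$ and $4$ in the definition of $\sigma_{\Psi,F}$ are precisely what is needed to absorb the width of $\psi$ and boundary effects at $\partial\operatorname{supp}\psi(\cdot-x_0)$. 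By construction,
\[
\Phi(t(x_0)^2)\;=\;\Psi(\sigma_{\Psi,F}(|x_0|)^2)\;=\;\int|\hat F(\eta)|^2\,\Psi\!\bigl((\max\{|x_0+2|,4\}+|\eta|)^2\bigr)\,d\eta,
\]
so the local scale $t(x_0)$ is already expressed in terms of the weight on the target left-hand side.

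\textbf{Applying Lemma~\ref{lem:poincare-wirtinger-density} and averaging.} Applied to $g_{x_0,\eta}$, and using that modulation translates $\hat f$ by $\eta$,
\[
\Phi(t(x_0)^2)\int|f|^2|\psi(\cdot-x_0)|^2\,dx\;\leq\;\int\Phi((\xi+\eta)^2)|\widehat{g_{x_0,0}}(\xi)|^2\,d\xi.
\]
Integrating against $|\hat G(\eta)|^2\,d\eta$ (using $\|\hat G\|_{L^2}=1$) and then over $x_0\in\mathbb{R}$, Fubini gives on the left the weighted integral $\int|f(x)|^2\bigl(\int\Phi(t(x_0)^2)|\psi(x-x_0)|^2\,dx_0\bigr)dx$, while a Parseval-type computation $\int|\widehat{g_{x_0,0}}(\xi)|^2\,dx_0=(\hat\psi^{\,2}*|\hat f|^2)(\xi)$ recasts the right as $\iint\Phi((\xi+\eta)^2)(\hat\psi^{\,2}*|\hat f|^2)(\xi)|\hat G(\eta)|^2\,d\xi\,d\eta$.

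\textbf{Matching the weights, main obstacle and equality.} It then remains to show, with $\psi$ chosen appropriately, that the left-hand weight $\int\Phi(t(x_0)^2)|\psi(x-x_0)|^2\,dx_0$ dominates the target $\int\Psi((x+y)^2)|\hat F(y)|^2\,dy$ — this is a change of variable $x_0\mapsto x-s$ followed by a monotonicity estimate using the $\max\{|x_0+2|,4\}+|\eta|$ structure of $\sigma_{\Psi,F}$ — and that on the right a further change of variable $(\xi,\eta)\mapsto(\xi-\zeta,\eta+\zeta)$ folded against $\hat\psi^{\,2}(\zeta)\,d\zeta$ converts the convolution $\hat\psi^{\,2}*|\hat f|^2$ back into $|\hat f|^2$, at the cost of replacing $|\hat G|^2$ by $\hat\psi^{\,2}*|\hat G|^2$, which for a compatible choice of $\psi$ tied to $G$ can be absorbed without loss. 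The principal technical obstacle is exactly this simultaneous matching — both sides must become the stated target without spurious smoothing — and this is why the density hypothesis is stated in the specific product form $S_{\Phi,\Psi}\circ\sigma_{\Psi,F}$: $\sigma_{\Psi,F}$ encodes the $|\hat F|^2$-smoothing on the left, while $S_{\Phi,\Psi}$ encodes the transition from $\Psi$ to $\Phi$. For the equality case, if the right-hand side is finite, equality propagates up the chain, forcing equality in each instance of Lemma~\ref{lem:poincare-wirtinger-density}; the rigidity there (valid for strictly convex $\Phi$ on the relevant range) then yields $f\equiv 0$.
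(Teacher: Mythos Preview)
Your architecture---localize $f$ by a compactly supported cutoff, apply Lemma~\ref{lem:poincare-wirtinger-density} at the position-dependent scale, integrate over the localization center---is exactly the paper's. But two steps in your implementation do not close.

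\textbf{The modulation is redundant and the absorption claim is false.} Averaging your displayed inequality against $|\hat G(\eta)|^2\,d\eta$ does nothing on the left (which is independent of $\eta$); after you also integrate over $x_0$ and apply Parseval in $x_0$, the right becomes $\iint\Phi((\xi+\eta)^2)|\hat f(\xi)|^2\bigl(|\hat\psi|^2*|\hat G|^2\bigr)(\eta)\,d\xi\,d\eta$, as you say. But $\zeta\mapsto\Phi((\xi+\eta+\zeta)^2)$ is convex (convex increasing composed with a convex function), and $|\hat\psi(\zeta)|^2\,d\zeta$ is a mean-zero probability measure since $\psi$ is real and even, so Jensen gives
\[
\iint\Phi((\xi+\eta)^2)\,|\hat f(\xi)|^2\,\bigl(|\hat\psi|^2*|\hat G|^2\bigr)(\eta)\,d\xi\,d\eta \;\ge\; \iint\Phi((\xi+\eta)^2)\,|\hat f(\xi)|^2\,|\hat G(\eta)|^2\,d\xi\,d\eta,
\]
the reverse of what you need; no ``compatible choice of $\psi$ tied to $G$'' repairs this. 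The fix is to drop the modulation entirely and take $\psi=G$. Then integrating $\int\Phi(\xi^2)\,|\widehat{G_v f}(\xi)|^2\,d\xi$ over $v$ and applying Plancherel \emph{in the variable $v$} yields the target right-hand side exactly, with no residual smoothing. This is what the paper does, and it is why $G$ appears in the hypotheses at all; your separate cutoff $\psi$ and the $|\hat G(\eta)|^2$-average are both superfluous.

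\textbf{The equality case.} You invoke rigidity in Lemma~\ref{lem:poincare-wirtinger-density}, but that lemma carries no equality statement, and Poincar\'e--Wirtinger inequalities generically have nontrivial extremizers. In the paper, strictness comes on the \emph{left}: after swapping the order of integration one replaces $\sigma_{\Psi,F}(|v|-1)$ by the strictly smaller $\sigma_{\Psi,F}(|x|-2)$ on $\operatorname{supp}G_v$, and this slack in the monotone replacement yields a strict inequality whenever $f\not\equiv0$ and the right-hand side is finite. (This, incidentally, is what the offsets in the definition of $\sigma_{\Psi,F}$ are calibrated for: the shift $|v|-1\to|x|-2$ over $\operatorname{supp}G_v$, followed by unwinding $\sigma_{\Psi,F}(|x|-2)$ via the $\max\{|t+2|,4\}$ to reach $\max\{|x|,4\}+|y|\ge|x+y|$.)
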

\begin{proof}
If $f$ vanishes on $\Lambda$, we have that $G_v f$ vanishes on a $(2S_{\Phi,\Psi}(\sigma_{\Psi,F}(|v|-1)))^{-1}$-dense set, where $G_v=G(x-v)$. We define the convex, increasing function
\[
\Phi_T(t):=\begin{cases}
    \Phi(t), & t\leq T,\\
    \Phi(T)+\Phi'(T)(t-T), & t>T.
\end{cases}
\]
By Lemma~\ref{lem:poincare-wirtinger-density}, we get the inequality
\[
\Phi_T(S_{\Phi,\Psi}(
\sigma_{\Psi,F}(|v|-1))^2)\int_\mathbb{R}|G_vf(x)|^2\,dx\leq\int_\mathbb{R}\Phi_T(\xi^2)|\hat{f}*\hat{G_v}|^2\,d\xi.
\]
Integrating over $v$, we get 
\begin{align}
    \int_\mathbb{R}\Phi_T(S_{\Phi,\Psi}(\sigma_{\Psi,F}(|v|-1))^{2})\int_\mathbb{R}|G_vf(x)|^2\,dx\,dv\leq\iint_{\mathbb{R}\times\mathbb{R}}\Phi_T(\xi^2)|\hat{f}*\hat{G_v}|^2\,d\xi\,dv.\notag   
\end{align}
Letting $T\to\infty$, we get
\begin{align}
    \int_\mathbb{R}\Phi(S_{\Phi,\Psi}(\sigma_{\Psi,F}(|v|-1))^{2})\int_\mathbb{R}|G_vf(x)|^2\,dx\,dv
    \leq \iint_{\mathbb{R}\times\mathbb{R}}\Phi(\xi^2)|\hat{f}*\hat{G_v}|^2\,d\xi\,dv
    \label{eq:integrated-poincare-inequality}
\end{align}
by the monotone convergence theorem. By the Plancherel theorem, we have
\begin{align}\label{eq:plancherel_identity}
    \iint_\mathbb{R\times \mathbb{R}}\Phi(\xi^2)|\hat{f}*\hat{G_v}|^2\,d\xi\,dv&=\iint_\mathbb{R\times \mathbb{R}}\Phi(\xi^2)\left|\int_\mathbb{R} \hat{f}(\xi-\eta)\hat{G}(\eta)e^{-2\pi i v\eta}\,d\eta\right|^2\,d\xi\,dv\\
    &=\int_\mathbb{R}\Phi(\xi^2)\int_\mathbb{R}\left|\int_\mathbb{R} \hat{f}(\xi-\eta)\hat{G}(\eta)e^{-2\pi i v\eta}\,d\eta\right|^2\,dv\,d\xi\notag\\
    &=\int_\mathbb{R}\Phi(\xi^2)\int_\mathbb{R} |\hat{f}(\xi-\eta)|^2|\hat{G}(\eta)|^2\,d\eta\,d\xi\notag\\
    &=\iint_{\mathbb{R}\times\mathbb{R}}\Phi((\xi+\eta)^2)|\hat{f}(\xi)|^2|\hat{G}(\eta)|^2\,d\xi\,d\eta.\notag
\end{align}
We now seek to lower bound the left-hand side of (\ref{eq:integrated-poincare-inequality}). By Fubini's theorem, we can rearrange the integral in the following way
\begin{align}
    &\int_\mathbb{R}\Phi(S_{\Phi,\Psi}(\sigma_{\Psi,F}(|v|-1))^2)\int_\mathbb{R}|G_vf(x)|^2\,dx\,dv\label{eq:integration-order-swap}\\
    &=\int_\mathbb{R}|f(x)|^2\int_\mathbb{R}\Phi(S_{\Phi,\Psi}(\sigma_{\Psi,F}(|v|-1))^{2})|G_v(x)|^2\,dv\,dx.\notag
\end{align}
 Since $\sigma_{\Psi,F}(t)$ is increasing when $t>2$, we know that $\sigma_{\Psi,F}(|x|-2)< \sigma_{\Psi,F}(|v|-1)$ if $\displaystyle{|x|\in (|v|-1, |v|+1)}$ and $|x|\geq4$. This gives us
 \begin{align*}
     \int_\mathbb{R}\Phi(S_{\Phi,\Psi}(\sigma_{\Psi,F}(|v|-1))^{2})|G_v(x)|^2\,dv 
    &>\Phi(S_{\Phi,\Psi}(\sigma_{\Psi,F}(|x|-2))^2)\int_\mathbb{R}|G_v(x)|^2\,dv\\
    &=\Phi(S_{\Phi,\Psi}(\sigma_{\Psi,F}(|x|-2))^2)\\
    &=\int_\mathbb{R}|\hat{F}(y)|^2\Psi((\max\{|x|,4\}+|y|)^2)\,dy\\
    &\geq\int_\mathbb{R}|\hat{F}(y)|^2\Psi((x+y)^2)\,dy
\end{align*}
when $|x|\geq4$. When $|x|<4$, we know that $\sigma_{\Psi,F}(|x|-2)\leq\sigma_{\Psi,F}(|v|-1)$ since $\sigma_{\Psi,F}(t)$ is non-decreasing for all $t\geq0$, but we now get a strict inequality at the last step since $|x|<\max\{|x|,4\}$. Consequently, we have 
 \begin{align}
     \int_\mathbb{R}\Phi(S_{\Phi,\Psi}(\sigma_{\Psi,F}(|v|-1))^{2})|G_v(x)|^2\,dv 
    >\int_\mathbb{R}|\hat{F}(y)|^2\Psi((x+y)^2)\,dy\label{eq:phi-lower-bound-shift}
\end{align}
for all $x\in\mathbb{R}$. Assuming $f\not\equiv 0$ and that the right-hand side of (\ref{eq:integrated-poincare-inequality}) is finite, we can combine (\ref{eq:integration-order-swap}) and (\ref{eq:phi-lower-bound-shift}) to get
\begin{align}
    \iint_{\mathbb{R}\times\mathbb{R}} |f(x)|^2|\hat{F}(y)|^2\Psi((x+y)^2)\,dy\,dx<\int_\mathbb{R}\Phi(S_{\Phi,\Psi}(\sigma_{\Psi,F}(|v|-1))^2)\int_\mathbb{R}|G_vf(x)|^2\,dx\,dv\label{eq:psi-weighted-lower-bound}
\end{align}
by the monotonicity of the Lebesgue integral. Applying the lower bound (\ref{eq:psi-weighted-lower-bound}) and the upper bound (\ref{eq:plancherel_identity}) to (\ref{eq:integrated-poincare-inequality}) yields
 \[
 \iint_{\mathbb{R}\times\mathbb{R}}\Psi((x+y)^2)|f(x)|^2|\hat{F}(y)|^2\,dy\,dx<\iint_{\mathbb{R}\times\mathbb{R}}\Phi((\xi+\eta)^2)|\hat{f}(\xi)|^2|\hat{G}(\eta)|^2\,d\xi\,d\eta,
 \] 
 assuming $f\not\equiv 0$ and that the right-hand side is finite.
\end{proof}
\begin{lemma}\label{lem:log-concave-derivative-bound}
    Let $f\in C^3$ be positive and increasing, $\mu$ a probability measure on $\mathbb{R}$ and \[     G(x):=f^{-1}\left(\int_\mathbb{R}f(x+|y|)\,d\mu(y)\right).   
    \]
    If $\log f'(t)$ is concave when $t\geq T_0>0$ and $G(x)<\infty$ for all $x\geq T_0$, then $G'(x)\leq 1$ for all $x\geq T_0$.
\end{lemma}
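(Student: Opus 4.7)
The plan is to differentiate the defining relation for $G$ and reduce $G'(x)\leq 1$ to a single application of Jensen's inequality, via a substitution that converts the hypothesis ``$\log f'$ concave on $[T_0,\infty)$'' into honest concavity on the relevant domain.

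Differentiating the identity $f(G(x))=\int_\mathbb{R}f(x+|y|)\,d\mu(y)$ under the integral sign (this is the one analytic technicality: the $C^3$ assumption on $f$, finiteness of $G$ on $[T_0,\infty)$, and log-concavity of $f'$ together provide the local domination needed to justify the interchange) yields
\[
G'(x)=\frac{1}{f'(G(x))}\int_\mathbb{R}f'(x+|y|)\,d\mu(y),
\]
so the conclusion $G'(x)\leq 1$ is equivalent to the inequality
\[
\int_\mathbb{R}f'(x+|y|)\,d\mu(y)\;\leq\;f'(G(x)). \qquad(\ast)
\]

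I now introduce the auxiliary function
\[
h:[f(T_0),\infty)\to(0,\infty),\qquad h(s):=f'\bigl(f^{-1}(s)\bigr),
\]
and compute by the chain rule
\[
h'(s)\;=\;\frac{f''(f^{-1}(s))}{f'(f^{-1}(s))}\;=\;(\log f')'\bigl(f^{-1}(s)\bigr).
\]
Since $f^{-1}$ is increasing on $[f(T_0),\infty)$ and $(\log f')'$ is decreasing on $[T_0,\infty)$ (by concavity of $\log f'$), the composition $h'$ is decreasing on $[f(T_0),\infty)$. Hence $h$ is \emph{genuinely concave} on $[f(T_0),\infty)$.

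For $x\geq T_0$ every point $x+|y|$ lies in $[T_0,\infty)$, so the random variable $Z(y):=f(x+|y|)$ takes values in $[f(T_0),\infty)$, and its $\mu$-mean $f(G(x))$ lies there as well. Jensen's inequality applied to the concave function $h$ on this interval gives
\[
\int_\mathbb{R}h\bigl(f(x+|y|)\bigr)\,d\mu(y)\;\leq\;h\!\left(\int_\mathbb{R}f(x+|y|)\,d\mu(y)\right)=h\bigl(f(G(x))\bigr).
\]
Since $h\circ f=f'$ on $[T_0,\infty)$, the left-hand side equals $\int_\mathbb{R}f'(x+|y|)\,d\mu(y)$ and the right-hand side equals $f'(G(x))$, which is precisely $(\ast)$. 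The main obstacle is the differentiation under the integral in the first step; the conceptual heart is the observation that $h=f'\circ f^{-1}$ inherits genuine concavity from the log-concavity of $f'$, after which Jensen closes the argument in a single line.
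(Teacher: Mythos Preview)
Your proof is correct and follows essentially the same route as the paper: differentiate the defining relation to reduce to $\int f'(x+|y|)\,d\mu\leq f'(G(x))$, set $\omega(t)=f'(f^{-1}(t))$ (your $h$), show it is concave on $[f(T_0),\infty)$, and apply Jensen. The only cosmetic difference is that the paper verifies concavity by computing $\omega''$ explicitly, whereas you show $h'=(\log f')'\circ f^{-1}$ is decreasing as a composition of a decreasing and an increasing map.
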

\begin{proof}
    By the inverse function theorem we know that
    \[
    (f^{-1})'\left(\int_\mathbb{R}f(x+|y|)\,d\mu(y)\right)=\frac{1}{f'\left(f^{-1}\left(\int_\mathbb{R}f(x+|y|)\,d\mu(y)\right)\right)}.
    \]
    Multiplying both sides by $\int_\mathbb{R}f'(x+|y|)\,d\mu(y)$, we get
    \[
    G'(x)=\frac{\int_\mathbb{R}f'(x+|y|)\,d\mu(y)}{f'(G(x))}.
    \]
    We now want to check if
    \[
    \int_\mathbb{R}f'(x+|y|)\,d\mu(y)\leq f'(G(x)).
    \]
    Defining $\omega(t):=f'(f^{-1}(t))$, we can rewrite this as
    \[
    \int_\mathbb{R}\omega(f(x+|y|))\,d\mu(y)\leq\omega\left(\int_\mathbb{R}f(x+|y|)\,d\mu(y)\right).
    \]
    This holds if $\omega$ is concave by Jensen's inequality; hence we want to show that $\omega(\tau)$ is concave when $\tau\geq\min\{f(T_0),\int_\mathbb{R}f(T_0+|y|)\,d\mu(y)\}=f(T_0)$. Since $\log f'(t)$ is concave when $t\geq T_0$, we know that
    \[
    (\log f'(t))''=\frac{f'''f'-(f'')^2}{(f')^2}(t)\leq 0,
    \]
    for $t\geq T_0$. We know that
    \[
    \omega''(t)=\left(\frac{f'''f'-(f'')^2}{(f')^3}\right)(f^{-1}(t)),
    \]
    so $\omega(\tau)$ is concave when $\tau\geq f(T_0)$.
\end{proof}
To ensure that $\sigma_{\Phi,H}(t)$ is well-defined, we will use the classical Beurling--Malliavin multiplier theorem \cite{beurling-malliavin}.
\begin{theoremx}[Beurling--Malliavin]\label{thm:beurling-malliavin}
Let $\omega$ be a non-negative Lipschitz function on $\mathbb{R}$ with $\int_{\mathbb{R}} \frac{\omega(x)}{1+x^{2}}\,dx < \infty$. Then for any $L > 0$, there exists a non-zero function $f \in L^{2}(\mathbb{R})$ such that $\operatorname{supp}f\subseteq[-L,L]$ and $|\hat{f}|\leq e^{-\omega}$.
\end{theoremx}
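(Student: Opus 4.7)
My plan is to dualize via Paley--Wiener: a function $f\in L^2(\mathbb{R})$ with $\operatorname{supp} f\subseteq[-L,L]$ corresponds bijectively, through $F:=\hat f$, to an entire function of exponential type $\leq 2\pi L$ that lies in $L^2(\mathbb{R})$. So the theorem is equivalent to producing a non-zero entire function $F$ of exponential type $\leq 2\pi L$, square-integrable on the real line, satisfying the pointwise envelope $|F(x)|\leq e^{-\omega(x)}$ for all $x\in\mathbb{R}$. The logarithmic integrability hypothesis $\int \omega(x)/(1+x^2)\,dx<\infty$ is precisely the Szeg\H{o}/Krein condition that makes such a function admissible in principle; the content of the theorem is that one can in fact achieve the prescribed type $2\pi L$ (rather than minimal type) with a non-trivial multiplier.

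The first concrete step would be a reduction: approximate $\omega$ from above by a Lipschitz weight $\tilde\omega\geq\omega$ that still satisfies $\int\tilde\omega/(1+x^2)\,dx<\infty$ and that is essentially piecewise constant on a partition $\{I_k\}$ of $\mathbb{R}$, with lengths $|I_k|$ chosen small where $\omega$ is large and large where $\omega$ decays, and with heights $m_k:=\max_{I_k}\tilde\omega$. The integrability hypothesis becomes, after this atomization, the Blaschke-type summability $\sum_k m_k|I_k|/(1+a_k^2)<\infty$, where $a_k$ is the centre of $I_k$. Any multiplier built atom-by-atom and then multiplied together will inherit both the total decay budget and the total type budget from this single numerical quantity.

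For each atom I would assemble an elementary entire factor $F_k$ --- the canonical choice is a Cauchy factor $((z-a_k-ih_k)/(z-a_k+ih_k))^{n_k}$ multiplied by a corrective linear exponential --- tuned so that $|F_k(x)|\leq e^{-m_k}$ on $I_k$ (up to harmless constants) while contributing exponential type only $\tau_k=O(n_k/h_k)$. Setting $F(z)=F_0(z)\prod_k F_k(z)$ against a baseline $F_0$ of type slightly less than $2\pi L$ (for instance a $\operatorname{sinc}^2$-factor, which guarantees non-triviality and $L^2$-integrability), one then has to verify three things in sequence: locally uniform convergence of the product (from the Blaschke-type summability), the pointwise bound $|F(x)|\leq e^{-\omega(x)}$ on $\mathbb{R}$ (essentially atom-by-atom, exploiting that far-away atoms only slightly diminish $|F|$ and nearby atoms provide the decay), and membership in $L^2(\mathbb{R})$ (inherited from $F_0$ and the boundedness of $|F/F_0|$).

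The genuine difficulty, and the historical obstruction overcome by Beurling and Malliavin, is the exponential type budget: naive atomization forces $\sum_k\tau_k$ to exceed $2\pi L$ unless the atoms are placed with delicate care. The way out is to translate the pointwise decay requirement into a statement about the zero set $Z=\{a_k\pm ih_k\}$ of the product and invoke the Beurling--Malliavin density theorem, which asserts that a sequence of upper Beurling--Malliavin density strictly less than $L/\pi$ is the zero set of some entire function of exponential type $\leq\pi L$. Thus the argument reorganizes the atoms so that $Z$ has controlled BM-density rather than merely controlled counting density, with the extra slack coming precisely from the hypothesis $\int\omega/(1+x^2)\,dx<\infty$. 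This density-versus-decay trade-off is the technical heart of the proof; once it is in place, extracting $f=\check F\in L^2([-L,L])$ is immediate from Paley--Wiener.
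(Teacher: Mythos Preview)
The paper does not prove this statement: Theorem~\ref{thm:beurling-malliavin} is quoted as a classical result, with a citation to the original Beurling--Malliavin paper, and is then used as a black box inside the proof of Theorem~\ref{thm:uniform-critical-uniqueness}. There is therefore no ``paper's own proof'' to compare your proposal against.

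As for your sketch on its own merits: the Paley--Wiener translation and the atomization heuristic are the right starting moves, and you correctly isolate the real obstruction, namely that a naive product of local factors blows the exponential-type budget. But your proposed resolution --- ``invoke the Beurling--Malliavin density theorem'' --- is not an actual reduction. The density theorem (on completeness radii of exponential systems) and the multiplier theorem are companion results of the same depth, proved in the same 1962 paper by intertwined methods (regularization of the weight, the so-called little multiplier theorem, and delicate harmonic-majorant estimates). Deferring the type-budget problem to the density theorem just relocates the difficulty; you would still need an independent proof of the latter, which is not easier. So the outline names the right obstacle but does not supply the missing idea that overcomes it.
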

To prove Theorem~\ref{thm:uniform-critical-uniqueness}, we now want to use Lemma~\ref{lem:fractional-poincare-wirtinger} to create a contradictory inequality.
\begin{proof}[Proof of Theorem~\ref{thm:uniform-critical-uniqueness}]
Assume without loss of generality that $S$ is subexponentially admissible. We can also assume that $S(0)=0$, since
\begin{align*}
    [S(|\lambda_j|+\tilde{d})-S(0)](\lambda_{j+1}-\lambda_j)\leq S(|\lambda_j|+\tilde{d})(\lambda_{j+1}-\lambda_j)&\leq\frac{1}{2}\quad\text{and}\\
    S^{-1}(|\mu_j|+d+S(0))(\mu_{j+1}-\mu_j)\leq S^{-1}(|\mu_j|+d_1)(\mu_{j+1}-\mu_j)&\leq\frac{1}{2},
\end{align*}
where $d_1\geq d+S(0)$. 

Let $\Phi(t)=t$ and $\Psi(t)=S^2(\sqrt{t})$, so that $S_{\Phi,\Psi}(t)=S(t)$. Since $S$ is convex, non-negative, and increasing with $S(0)=0$, we know that $\Psi$ is convex, non-negative, and increasing. We first want to show that
\[
\sigma_{\Psi, F}(t)=\sqrt{\Psi^{-1}\left(\int_\mathbb{R}|\hat{F}(\eta)|^2\Psi((\max\{|t+2|,4\}+|\eta|)^2)\,d\eta\right)}\leq t+d
\]
for some $d\geq0$. Since $\int_\mathbb{R}\frac{|\log \max\{\Psi(t^2),1\}|}{1+t^2}\,dt<\infty$, we know that 
\[
\int_\mathbb{R}\frac{|\log \max\{\Psi(t^2),1\}+|\log (1+t^2)|}{1+t^2}\,dt<\infty.
\] 
We also need to show that
\[
\omega(t):=\log \max\{\Psi(t^2),1\}+\log(1+t^2)
\]
is Lipschitz. Differentiating $\omega$, we get
\[
|\omega'(t)|\leq \left|\frac{2t\Psi'(t^2)}{\Psi(t^2)}\right|+\left|\frac{2t}{1+t^2}\right|\leq \left|\frac{2t\Psi'(t^2)}{\Psi(t^2)}\right|+1
\]
when $t\geq \sqrt{\Psi^{-1}(1)}$, taking the right derivative at $t=\sqrt{\Psi^{-1}(1)}$. Similarly, we know that $|\omega'(t)|=\left|\frac{2t}{1+t^2}\right|\leq 1$ when $0\leq t\leq \sqrt{\Psi^{-1}(1)}$, taking the left derivative when $t=\sqrt{\Psi^{-1}(1)}$.

To upper bound $\left|\tfrac{2t\Psi'(t^2)}{\Psi(t^2)}\right|$, we use the fact that $\log \big(2t\Psi'(t^2)\big)$ is concave for $t\geq T_0$, by the definition of subexponentially admissible. Differentiating, we see that
\[
0\leq \frac{(2t\Psi'(t^2))'}{2t\Psi'(t^2)}\leq L
\]
for some $L>0$ when $t\geq T_0$. Rearranging, and integrating, we see that
\[
0\leq 2t\Psi'(t^2)\leq L\Psi(t^2)\text{, and consequently }
\left|\frac{2t\Psi'(t^2)}{\Psi(t^2)}\right|\leq L
\]
for $t\geq T_0$. If $\sqrt{\Psi^{-1}(1)}\leq T_0$, then we know that
\[
L_1:=\sup_{\sqrt{\Psi^{-1}(1)}\leq t\leq T_0}|\omega'(t)|<\infty
\]
by the extreme value theorem, since $\omega\in C^1([\sqrt{\Psi^{-1}(1)},\infty))$. Hence, $\omega$ is Lipschitz on $[0,\infty)$ with Lipschitz constant at most $L+1$ or $\max\{L+1,L_1\}$. Since $\omega$ is even, we can conclude that it is Lipschitz on all of $\mathbb{R}$.

We now know by the Beurling--Malliavin theorem, that there exists a function $H\in C^\infty_c(-1,1)$ such that 
\[|\hat{H}(\eta)|\leq \frac{\min\{(\Psi(\eta^2))^{-1},1\}}{1+\eta^2}.
\]
Choosing such an $H$ and normalizing it $F:=\frac{H}{\|H\|_{L^2}}$, we know that $\sigma_{\Psi,F}(t)<\infty$. Since $|\hat{F}(\eta)|^2\,d\eta$ is a probability measure and $\frac{d}{dt}\Psi(t^2)$ is log-concave for $t\geq T_0$, we know by Lemma~\ref{lem:log-concave-derivative-bound} that $\sigma_{\Psi,F}'(t)\leq1$ for $t\geq \max\{T_0,2\}$; hence there exists some $\tilde{d}$ such that $\sigma_{\Psi,F}(t)\leq t+\tilde{d}$.
Since $\Phi(t)=t$, we can choose any $G\in C^\infty_c(-1,1)$ with $\|G\|_{L^2}=1$ and get $\sigma_{\Phi,G}(t)\leq t+d$ using Lemma~\ref{lem:log-concave-derivative-bound}. This gives us
\[
\sigma_{\Psi,F}(t)\leq t+\tilde{d}\quad\text{and}\quad\sigma_{\Phi,G}(t)\leq t+d.
\]
Hence, by Lemma~\ref{lem:fractional-poincare-wirtinger}, any non-zero function in $\mathcal{H}$ vanishing on a pair $(\Lambda,M)$ satisfying (\ref{eq:sequence-ordering}),
    \begin{align*}
    \sup_{j\in \mathbb{Z}}S(|\lambda_j|+\tilde{d})(\lambda_{j+1}-\lambda_j)\leq \frac{1}{2},\quad\text{and}\quad
    \sup_{j\in\mathbb{Z}}S^{-1}(|\mu_j|+d)(\mu_{j+1}-\mu_j)\leq\frac{1}{2},
    \end{align*}
must also satisfy the inequalities
 \[
 \iint_{\mathbb{R}\times\mathbb{R}}\Psi((x+y)^2)|f(x)|^2|\hat{F}(y)|^2\,dy\,dx<\iint_{\mathbb{R}\times\mathbb{R}}(\xi+\eta)^2|\hat{f}(\xi)|^2|\hat{G}(\eta)|^2\,d\xi\,d\eta,
 \] 
and 
 \[
 \iint_{\mathbb{R}\times\mathbb{R}}(\xi+\eta)^2|\hat{f}(\xi)|^2|\hat{G}(\eta)|^2\,d\xi\,d\eta<\iint_{\mathbb{R}\times\mathbb{R}}\Psi((x+y)^2)|f(x)|^2|\hat{F}(y)|^2\,dy\,dx.
 \]

This is clearly a contradiction; hence, no such non-zero function can exist.
\end{proof}

\section{Proof of Theorem~\ref{thm:widely-admissible-uniqueness}}

\begin{lemma}\label{lem:f-shift-asymptotics}
    Let $f:[0,\infty)\to[0,\infty)$ be some convex, increasing $C^1$-function with 
    \[
    \limsup_{x\to\infty}\frac{f'(x)}{f^{1+c}(x)}=0
    \]
    for some $c>0$. Then for all $C>0$, we have
    $f\left(x+\frac{C}{f(x)^c}\right)\sim f(x)$ as $x\to\infty$.
\end{lemma}
\begin{proof}
    Let $R_x:=\frac{f\left(x+\frac{C}{f(x)^{c}}\right)}{f(x)}$.
    For any $\frac{1}{C}>\varepsilon>0$ there exists an $X>0$ such that $\frac{f'(x)}{f^{1+c}(x)}<\varepsilon$ when $x\geq X$. Consequently,
    \begin{align*}
        \frac{1}{f(x)^c}-\frac{1}{f\left(x+\frac{C}{f(x)^c}\right)^c}=\int_{x}^{x+\frac{C}{f(x)^c}}\frac{f'(y)}{f^{1+c}(y)}\,dy<\frac{C\varepsilon}{f(x)^c}.
    \end{align*}
    Multiplying both sides with $f\left(x+\frac{C}{f(x)^c}\right)^c$, we get $R_x^c-1<C\varepsilon R_x^c$ and consequently $ 1\leq R_x<\frac{1}{(1-C\varepsilon)^\frac{1}{c}}$ when $x\geq X$. Letting $\varepsilon\to0$, we see that $R_x\to1$ as $x\to\infty$. 
\end{proof}
\begin{lemmax}[Lemma 1 in \cite{kulikov2025}]\label{lem:interval-energy}
For any $\varepsilon > 0$, all $a < b$ and all functions $f \in \mathcal{H}$, we have
\[
\int_a^b |f(x)|^2\,dx \leq (1+\varepsilon) \left( \frac{b-a}{\pi} \right)^2 \int_a^b |f'(x)|^2\,dx 
+ (1+\varepsilon^{-1})(b-a)(|f(a)|^2 + |f(b)|^2).
\]
\end{lemmax}
\begin{lemma}\label{lem:fourier-weighted-gap}
    Let $S:[0,\infty)\to[0,\infty)$ be some increasing function. Let $M=(\mu_j)_{j\in\mathbb{Z}}$ and
    \[
    \sup_{j\in\mathbb{Z}}S(\max\{|\mu_j|,|\mu_{j+1}|\})(\mu_{j+1}-\mu_j)\leq\frac{1}{2}.
    \]
    Then if $f|_M=0$, we have
   \begin{align*}
       &\int_\mathbb{R}S^2(|\xi|)|\hat{f}(\xi)|^2\,d\xi\\
       &\leq(1+\varepsilon)\int_\mathbb{R}x^2|f(x)|^2\,dx+2(1+\varepsilon^{-1})\sum_{j\in\mathbb{Z}}(\mu_{j+1}-\mu_{j-1})S^2(\max\{|\mu_{j-1}|,|\mu_{j+1}|\})|\hat{f}(\mu_j)|^2.
   \end{align*}
   for all $\varepsilon>0$.
\end{lemma}
\begin{proof}
    By Lemma~\ref{lem:interval-energy}, we have
    \begin{align*}
        &\int_\mathbb{R}S^2(|\xi|)|\hat{f}(\xi)|^2\,d\xi\\
        &\leq\sum_{j\in\mathbb{Z}}S^2(\max\{|\mu_j|,|\mu_{j+1}|\})\int_{\mu_j}^{\mu_{j+1}}|\hat{f}(\xi)|^2\,d\xi\\
        &\leq\frac{1+\varepsilon}{(2\pi)^2}\int_\mathbb{R}|\hat{f}'(\xi)|^2\,d\xi+(1+\varepsilon^{-1})\sum_{j\in\mathbb{Z}}(\mu_{j+1}-\mu_j)S^2(\max\{|\mu_j|,|\mu_{j+1}|\})(|\hat{f}(\mu_j)|^2+|\hat{f}(\mu_{j+1})|^2)\\
        &\leq\frac{1+\varepsilon}{(2\pi)^2}\int_\mathbb{R}|\hat{f}'(\xi)|^2\,d\xi+2(1+\varepsilon^{-1})\sum_{j\in\mathbb{Z}}(\mu_{j+1}-\mu_{j-1})S^2(\max\{|\mu_{j-1}|,|\mu_{j+1}|\})|\hat{f}(\mu_j)|^2.
    \end{align*}
    By the Plancherel theorem, we know that 
    \[
    \frac{1+\varepsilon}{(2\pi)^2}\int_\mathbb{R}|\hat{f}'(\xi)|^2\,d\xi=(1+\varepsilon)\int_\mathbb{R}x^2|f(x)|^2\,dx
    \]
    thus finishing the proof.
\end{proof}
If $S$ grows faster than an exponential, then we cannot use Lemma~\ref{lem:log-concave-derivative-bound} and Theorem~\ref{thm:beurling-malliavin}. However, we can still construct similar inequalities using the power series representations of $S$.

\begin{proof}[Proof of Theorem~\ref{thm:widely-admissible-uniqueness}]
We can assume that $\tilde{S}$ is not a polynomial, since then we would get uniqueness from Theorem~\ref{thm:uniform-critical-uniqueness}. Furthermore, we can assume without loss of generality that $\tilde{S}^{(n)}(0)=0$ for all $n\leq \lceil\frac{2}{s}\rceil$, where $s$ and $\tilde{S}$ are as in the definition of widely admissible. This is because trivially $\tilde{S}(x)\sim \tilde{S}_1(x)$ as $x\to\infty$, where
\[
\tilde{S}_1(x):=\tilde{S}(x)-\sum_{n=0}^{\lceil\frac{2}{s}\rceil}\frac{\tilde{S}^{(n)}(0)}{n!}x^n.
\]
Similarly, since $\tilde{S}(x)\sim \tilde{S}_1(x)$ as $x\to\infty$ and both are convex and increasing, we have $\tilde{S}^{-1}(x)\sim \tilde{S}_1^{-1}(x)$ as $x\to\infty$, where $\tilde{S}^{-1}(y):=\inf\{x\geq0:\tilde{S}|_{[0,\infty)}(x)\geq y\}$.

Assume that $f|_\Lambda=0$ and $\hat{f}|_M=0$, and $(\Lambda, M)$ satisfies (\ref{eq:sequence-ordering}),
\begin{align*}
    \sup_{j\in\mathbb{Z}}\tilde{S}^{-1}(8e\max\{|\lambda_j|,|\lambda_{j+1}|\})^\frac{1}{s}(\lambda_{j+1}-\lambda_j)&\leq\frac{1}{2},\quad\text{and}\\
    \sup_{j\in\mathbb{Z}} \left(\tilde{S}(\max\{|\mu_j|,|\mu_{j+1}|\}^s)+\sqrt{C_{\tilde{S},s}(48e\pi^{-1}s^2)}\right)(\mu_{j+1}-\mu_j)&\leq\frac{1}{2},
\end{align*}
where $C_{\tilde{S},s}(48e\pi^{-1}s^2)$ is a constant depending only on $\tilde{S}$ and $s$. Clearly $S(x)\sim (S(x)+\sqrt{C_{\tilde{S},s}(48e\pi^{-1}s^2)})$ as $x\to\infty$. We also have that $\tilde{S}^{-1}(x)\sim \tilde{S}^{-1}(8ex)$ as $x\to\infty$, since $\tilde{S}$ grows faster than any polynomial.

Let
\begin{align}\label{eq:Gx_def}
    G(x):=\mathbbm{1}_{[-u,u]} * 
\underbrace{\frac{k}{u} \mathbbm{1}_{\left[-\tfrac{u}{2k}, \tfrac{u}{2k}\right]} 
* \dots * 
\frac{k}{u} \mathbbm{1}_{\left[-\tfrac{u}{2k}, \tfrac{u}{2k}\right]}}_{k\text{--fold convolution}}
\end{align}
and $G_v(x):=G(x-v)$. We know that $\operatorname{supp}(G_v) \subset \left[ v - \tfrac{3}{2}u,\; v + \tfrac{3}{2}u \right]
$ and hence $G_vf$ vanishes on a $(2\tilde{S}^{-1}(8e\left(|v|-\frac{3}{2}u\right))^\frac{1}{s})^{-1}$-dense set. Thus, applying Lemma~\ref{lem:poincare-wirtinger-density} with $\Phi(t)=t^\theta$ for some $\theta\geq 1$, we have
\begin{align*}
    &\tilde{S}^{-1}\left(8e\left(|v|-\frac{3}{2}u\right)\right)^{2\frac{\theta}{s}}\int_\mathbb{R}|G_v f(x)|^2dx\leq\int_\mathbb{R}|\xi|^{2\theta}|\hat{f}*\hat{G_v}|^2\,d\xi
\end{align*}
as long as $8e\left(|v|-\frac{3}{2}u\right)\geq 0$. Integrating over $v$ we get
\begin{align*}
    &
    \int_{|v|\geq\frac{3}{2}u}\tilde{S}^{-1}\left(8e\left(|v|-\frac{3}{2}u\right)\right)^{2\frac{\theta}{s}}\int_\mathbb{R}|G_v f(x)|^2dx\leq\iint_{\mathbb{R}\times\mathbb{R}}|\xi|^{2\theta}|\hat{f}*\hat{G_v}|^2\,d\xi\,dv.
\end{align*}
Using the same calculation as in (\ref{eq:plancherel_identity}), we have
\begin{align*}
    \iint_\mathbb{R\times \mathbb{R}}|\xi|^{2\theta}|\hat{f}*\hat{G_v}|^2\,d\xi\,dv=\iint_{\mathbb{R}\times\mathbb{R}}|\xi+\eta|^{2\theta}|\hat{f}(\xi)|^2|\hat{G}(\eta)|^2\,d\xi\,d\eta
\end{align*}
by the Plancherel theorem, giving us the inequality
\begin{align}
    \int_{|v|\geq\frac{3}{2}u}\tilde{S}^{-1}\left(8e\left(|v|-\frac{3}{2}u\right)\right)^{2\frac{\theta}{s}}\int_\mathbb{R}|G_v f(x)|^2dx\,dv \leq\iint_{\mathbb{R}\times\mathbb{R}}|\xi+\eta|^{2\theta}|\hat{f}(\xi)|^2|\hat{G}(\eta)|^2d\xi d\eta.\label{eq:v-integrated-fourier-bound}
\end{align}
Since $\tilde{S}^{-1}$ is non-decreasing, we know that $\tilde{S}^{-1}(8e(|x|-3u))\leq \tilde{S}^{-1}(8e\left(|v|-\frac{3}{2}u\right))$, when $|x|\in[|v|-\frac{3}{2}u,|v|+\frac{3}{2}u]$. This gives us the lower bound
\begin{align}
&\int_\mathbb{R}|G(v)|^2dv\int_{|x|\geq 3u}\tilde{S}^{-1}(8e(|x|-3u))^{2\frac{\theta}{s}}\ |f(x)|^2dx\label{eq:weighted-s-inverse-inequality}\\
&\leq\int_{|v|\geq\frac{3}{2}u}\tilde{S}^{-1}\left(8e\left(|v|-\frac{3}{2}u\right)\right)^{2\frac{\theta}{s}}\int_\mathbb{R}|G_v f(x)|^2dx\,dv.\notag
\end{align}
Since
\[
\widehat{G}(\eta) 
= \widehat{\mathbbm{1}}_{[-u,u]}(\eta)  
\left( \frac{\sin\!\left( \pi \tfrac{u}{k} \eta \right)}{\pi \tfrac{u}{k} \eta} \right)^k\quad\text{and}\quad\bigl(|\xi| + |\eta|\bigr)  
\left| \frac{\sin\!\left(\pi \tfrac{u}{k} \eta\right)}{\pi \tfrac{u}{k} \eta} \right|
\;\leq\; |\xi| + \frac{k}{\pi u},
\]
we know that
\begin{align}
    \iint_{\mathbb{R}\times\mathbb{R}}|\xi+\eta|^{2\theta}|\hat{f}(\xi)|^2|\hat{G}(\eta)|^2d\xi d\eta\leq2u\int_\mathbb{R}\left(|\xi|+\frac{k}{\pi u}\right)^{2\theta}|\hat{f}(\xi)|^2d\xi\label{eq:fourier-convolution-bound}
\end{align}
assuming $k\geq\theta$. Hence, we choose $k$ as $k=\lceil\theta\rceil$. We also know that 
\[
\underbrace{\frac{\lceil\theta\rceil}{u} \mathbbm{1}_{\left[-\tfrac{u}{2\lceil\theta\rceil}, \tfrac{u}{2\lceil\theta\rceil}\right]} 
* \dots * 
\frac{\lceil\theta\rceil}{u} \mathbbm{1}_{\left[-\tfrac{u}{2\lceil\theta\rceil}, \tfrac{u}{2\lceil\theta\rceil}\right]}}_{\lceil\theta\rceil\text{--fold convolution}}
\]
is a probability measure with support $[-\frac{u}{2},\frac{u}{2}]$. Hence, $G_v(x)=1$ when $|x-v|\leq\frac{u}{2}$, so
$u\leq\displaystyle{\int_\mathbb{R}|G(x-v)|^2\,dv}$. Applying the lower bound (\ref{eq:weighted-s-inverse-inequality}) and the upper bound (\ref{eq:fourier-convolution-bound}) to (\ref{eq:v-integrated-fourier-bound}), we get
\begin{align*}
    &u\int_{|x|\geq 3u}\tilde{S}^{-1}(8e(|x|-3u))^{2\frac{\theta}{s}}\ |f(x)|^2dx\leq2u\int_\mathbb{R}\left(|\xi|+\frac{\lceil\theta\rceil}{\pi u}\right)^{2\theta}|\hat{f}(\xi)|^2d\xi,
\end{align*}
which we can divide by $u$ to get
\begin{align*}
    &\int_{|x|\geq 3u}\tilde{S}^{-1}(8e(|x|-3u))^{2\frac{\theta}{s}}\ |f(x)|^2dx\leq2\int_\mathbb{R}\left(|\xi|+\frac{\lceil\theta\rceil}{\pi u}\right)^{2\theta}|\hat{f}(\xi)|^2d\xi.
\end{align*}
For $|x|\geq 6u$, we have
\[
\tilde{S}^{-1}(4e|x|)= \tilde{S}^{-1}\left(8e|x|\left(\frac{6u-3u}{6u}\right)\right)\leq \tilde{S}^{-1}(8e(|x|-3u))
\]
and consequently
\begin{align}
    \int_{|x|\geq 6u}\tilde{S}^{-1}(4e|x|)^{2\frac{\theta}{s}}\ |f(x)|^2dx\leq2\int_\mathbb{R}\left(|\xi|+\frac{\lceil\theta\rceil}{\pi u}\right)^{2\theta}|\hat{f}(\xi)|^2d\xi.\label{eq:weighted-moment-inequality}
\end{align}
Choosing $u=\frac{2\lceil\theta\rceil^2}{\pi}$, we know that 
\begin{align}
    \int_{|\xi|\geq1}\left(|\xi|+\frac{k}{\pi u}\right)^{2\theta}|\hat{f}(\xi)|^2\,d\xi&\leq\left(1+\frac{1}{2\lceil\theta\rceil}\right)^{2\theta}\int_{|\xi|\geq1}|\xi|^{2\theta}|\hat{f}(\xi)|^2\,d\xi\label{eq:fourier-high-freq-bound}\\
    &\leq\left(1+\frac{1}{2 \lceil\theta\rceil}\right)^{2\lceil\theta\rceil}\int_{|\xi|\geq1}|\xi|^{2\theta}|\hat{f}(\xi)|^2\,d\xi\notag\\
    &\leq e\int_{|\xi|\geq1}|\xi|^{2\theta}|\hat{f}(\xi)|^2\,d\xi\notag
\end{align}
since $(1+\frac{1}{x})^x$ is monotonically increasing for $x>0$ and converges to $e$ as $x\to\infty$. Similarly, we have
\begin{align}
        \int_{|\xi|\leq1}\left(|\xi|+\frac{\lceil\theta\rceil}{\pi u}\right)^{2\theta}|\hat{f}(\xi)|^2\,d\xi\leq e\int_{|\xi|\leq1}|\hat{f}(\xi)|^2\,d\xi.\label{eq:fourier-low-freq-bound}
\end{align}
Combining (\ref{eq:weighted-moment-inequality}) with the upper bounds (\ref{eq:fourier-high-freq-bound}) and (\ref{eq:fourier-low-freq-bound}) yields
\begin{align*}
    \int_{|x|\geq 6u}\tilde{S}^{-1}(4e|x|)^{2\frac{\theta}{s}}\ |f(x)|^2dx\leq2e\left(\int_\mathbb{R}\left|\xi\right|^{2\theta}|\hat{f}(\xi)|^2d\xi+\int_\mathbb{R}|\hat{f}(\xi)|^2\,d\xi\right).
\end{align*} 
Choosing $\theta$ such that $n=2\frac{\theta}{s}\in\mathbb{N}$ and multiplying both sides by $\frac{A_n}{n!}\geq 0$ where $A_n=\frac{d^n}{dz^n}\tilde{S}^2(0)$, we have
\begin{align}
    \int_{|x|\geq 6u}\frac{A_n\tilde{S}^{-1}(4e|x|)^{n}}{n!}\ |f(x)|^2dx\leq\frac{2eA_n}{n!}\left(\int_\mathbb{R}\left|\xi\right|^{ns}\hat{f}(\xi)|^2d\xi+\int_\mathbb{R}|\hat{f}(\xi)|^2\,d\xi\right).\label{eq:nth-moment-bound}
\end{align}
Since 
\[
\frac{A_n}{n!}\int_{|x|\leq 6u}\tilde{S}^{-1}(4e|x|)^n|f(x)|^2\,dx\leq \frac{A_n( \tilde{S}^{-1}(24eu))^n }{n!}\int_\mathbb{R}|f(x)|^2\,dx
\]
we can add this to (\ref{eq:nth-moment-bound}) and get
\begin{align*}
    &\int_\mathbb{R}\frac{A_n \tilde{S}^{-1}(4e|x|)^{n}}{n!}\ |f(x)|^2dx\\
    &\leq\frac{2eA_n}{n!}\left(\int_\mathbb{R}\left|\xi\right|^{ns}|\hat{f}(\xi)|^2d\xi+\int_\mathbb{R}|\hat{f}(\xi)|^2\,d\xi\right) +\frac{A_n(\tilde{S}^{-1}(24eu))^n}{n!}\int_\mathbb{R}|\hat{f}(\xi)|^2\,d\xi.
\end{align*}
Since $\theta\geq 1$, we know that $\lceil\theta\rceil\leq 2\theta$, hence $u=\frac{2\lceil\theta\rceil^2}{\pi}\leq\frac{8\theta^2}{\pi}=\frac{2}{\pi}(ns)^2$. By a standard Cauchy estimate, we know that
\begin{align}
    \frac{A_n}{n!}(\tilde{S}^{-1}(24eu))^n&\leq\frac{A_n}{n!}\left(\tilde{S}^{-1}\left(\frac{48e}{\pi}(ns)^2\right)\right)^n\label{eq:cauchy-estimate-bound}\\
    &\leq \max_{|z|=r}\frac{|\tilde{S}^2(z)|}{r^n}\left(\tilde{S}^{-1}\left(\frac{48e}{\pi}(ns)^2\right)\right)^n\notag
\end{align}
for all $r>0$. We want to choose $r=\left(1+\frac{4\log n}{n}\right)\tilde{S}^{-1}\left(\frac{48e}{\pi}(ns)^2\right)$. By Lemma~\ref{lem:f-shift-asymptotics}, we know that
\[
\tilde{S}\left(x+\frac{C}{\tilde{S}(x)^{\frac{1}{2}-\delta}}\right)\sim \tilde{S}(x)
\]
as $x\to\infty$ for some $0<\delta<\frac{1}{2}$. Since $S$ is convex and grows faster than any polynomial, we know that for sufficiently large $x$
\[
\frac{x\log \tilde{S}(x)}{\sqrt{\tilde{S}(x)}}\leq \frac{C}{\tilde{S}(x)^{\frac{1}{2}-\delta}}
\]
for all $0<\delta<\frac{1}{2}$ and $0<C$. This means that
\[
\tilde{S}(x)\leq \tilde{S}\left(x+\frac{4x\log x}{\sqrt{\tilde{S}(x)}}\right)\leq \tilde{S}\left(x+\frac{1}{\tilde{S}(x)^{\frac{1}{2}-\delta}}\right)
\]
for sufficiently large $x$. Consequently, choosing $r=\left(1+\frac{4\log n}{n}\right)\tilde{S}^{-1}\left(\frac{48e}{\pi}(ns)^2\right)$, we have $\frac{A_n}{n!}(S^{-1}(24eu))^n=O(n^{-2})$ as $n\to\infty$. Meaning that, we have
\begin{align*}
    &\sum_{n=\lceil\frac {2}{s}\rceil}^\infty\int_\mathbb{R}\frac{A_n\tilde{S}^{-1}(4e|x|)^{n}}{n!}\ |f(x)|^2dx
    \\&\leq2e\left(\sum_{n=\lceil\frac {2}{s}\rceil}^\infty\frac{A_n}{n!}\int_\mathbb{R}\left|\xi\right|^{ns}|\hat{f}(\xi)|^2d\xi+C_{\tilde{S},s}(48e\pi^{-1}s^2)\int_\mathbb{R}|\hat{f}(\xi)|^2\,d\xi \right)
\end{align*}
where
\begin{align}
    C_{\tilde{S},s}(t):=\tilde{S}^2(1)+\frac{1}{2e}\sum_{n=\lceil\frac {2}{s}\rceil}^\infty\frac{A_n}{n!}\left(\tilde{S}^{-1}\left(tn^2\right)\right)^n.\label{eq:CStilde}
\end{align}
By a similar calculation, we know that $C_{\tilde{S},s}(t)<\infty$ for all $t>0$. Using the power series representation of $\tilde{S}^2(z)$ and the monotone convergence theorem, we get
\begin{align*}
    &4e\int_\mathbb{R}|x|^2 |f(x)|^2dx\leq2e\int_\mathbb{R}\left(\tilde{S}^2\left(\left|\xi\right|^s\right)+C_{\tilde{S},s}(48e\pi^{-1}s^2)\right)|\hat{f}(\xi)|^2d\xi.
\end{align*} 
Dividing by $2e$, we get
\begin{align}
    &2\int_\mathbb{R}x^2 |f(x)|^2dx\leq\int_\mathbb{R}\left(S^2\left(\left|\xi\right|\right)+C_{\tilde{S},s}(48e\pi^{-1}s^2)\right)|\hat{f}(\xi)|^2d\xi.\label{eq:weighted-uncertainty-principle}
\end{align} 
By Lemma~\ref{lem:fourier-weighted-gap}, we know that
\[
\int_\mathbb{R}(S^2(|\xi|)+C_{\tilde{S},s}(48e\pi^{-1}s^2))|\hat{f}(\xi)|^2\,d\xi\leq\int_\mathbb{R}x^2|f(x)|^2\,dx.
\]
Combining this with (\ref{eq:weighted-uncertainty-principle}), we get
\[
2\int_\mathbb{R}x^2|f(x)|^2\,dx\leq\int_\mathbb{R}x^2|f(x)|^2\,dx,
\]
which is a contradiction unless $f\equiv0$.
\end{proof}
\section{Proof of Theorem~\ref{thm:main-sampling-inequality}}
\subsection{Lower frame bound}
We now provide a related inequality to Lemma 3(i) in \cite{kulikov2025}, however with different tradeoffs. We get a much better weight for $\Phi$ with very rapidly growing derivatives:
\[
\frac{\Phi(t^2)-\Phi(0)}{t}\leq t\Phi'(t^2),
\]
but with an extra $\varepsilon$-dependent coefficient on the right-hand side of the inequality.
\begin{lemma}\label{lem:pw-density-sampled}
    Let $t > 0$, $0 < \varepsilon < 1$, and let $\Gamma$ be a $(1-\varepsilon)(2t)^{-1}$-dense discrete subset of $\mathbb{R}$. There exists a positive constant $C_\varepsilon$ such that, for all convex increasing functions $\Phi: [0,\infty) \to [0,\infty)$ and all functions $f \in \mathcal{H}$, we have
\[
\Phi(t^2) \int_{\mathbb{R}} |f(x)|^2 \, dx 
\leq C_\varepsilon\left(\int_{\mathbb{R}} \Phi(\xi^2) |\hat{f}(\xi)|^2 \, d\xi 
+ (\Phi(t^2)-\Phi(0)) \sum_{j\in\mathbb{Z}}(\gamma_{j+1}-\gamma_{j-1}) |f(\gamma_j)|^2\right).
\]
\end{lemma}
\begin{proof}
    Applying Lemma~\ref{lem:interval-energy} and the Plancherel theorem, we get
    \begin{align*}
        \int_\mathbb{R}|f(x)|^2\,dx&=
        \sum_{j\in\mathbb{Z}}\int_{\gamma_j}^{\gamma_{j+1}}|f(x)|^2\,dx\\
        &\leq\sum_{j\in\mathbb{Z}}\left((1+\varepsilon)\Bigg(\frac{\gamma_{j+1}-\gamma_j}{\pi}\right)^2\int_{\gamma_j}^{\gamma_{j+1}}|f'(x)|^2\,dx\\
        &+2(1+\varepsilon^{-1})(\gamma_{j+1}-\gamma_j)(|f(\gamma_j)|^2+|f(\gamma_{j+1})|^2)\Bigg)\\
        &\leq(1+\varepsilon)\left(\frac{1-\varepsilon}{2\pi t}\right)^2\int_\mathbb{R}|f'(x)|^2\,dx+2(1+\varepsilon^{-1})\sum_{j\in\mathbb{Z}}(\gamma_{j+1}-\gamma_{j-1})|f(\gamma_j)|^2\\
        &\leq\frac{1-\varepsilon}{t^2}\int_\mathbb{R}\xi^2|\hat{f}(\xi)|^2\,d\xi+2(1+\varepsilon^{-1})\sum_{j\in\mathbb{Z}}(\gamma_{j+1}-\gamma_{j-1})|f(\gamma_j)|^2.
    \end{align*}
    Multiplying by $\Phi(t^2)-\Phi(0)$, we get
    \begin{align*}
        &(\Phi(t^2)-\Phi(0))\int_\mathbb{R}|f(x)|^2\,dx\\
        &\leq(1-\varepsilon)\frac{\Phi(t^2)-\Phi(0)}{t^2}\int_\mathbb{R}\xi^2|\hat{f}(\xi)|^2\,d\xi+2(1+\varepsilon^{-1})(\Phi(t^2)-\Phi(0)) \sum_{j\in\mathbb{Z}}(\gamma_{j+1}-\gamma_{j-1}) |f(\gamma_j)|^2.
    \end{align*}
    Since $\Phi(t)-\Phi(0)$ is convex, increasing and vanishes at the origin, we know that 
    \[
    (\Phi(t^2)-\Phi(0))\frac{\xi^2}{t^2}\leq\max\{\Phi(t^2),\Phi(\xi^2)\}-\Phi(0)\leq\Phi(t^2)+\Phi(\xi^2)-2\Phi(0)
    \]
    for all $t>0$. Plugging this into our previous inequality, we get
\begin{align*}
        (\Phi(t^2)-\Phi(0))\int_\mathbb{R}|f(x)|^2\,dx
        &\leq(1-\varepsilon)\int_\mathbb{R}(\Phi(t^2)+\Phi(\xi^2)-2\Phi(0))|\hat{f}(\xi)|^2\,d\xi\\
        &+2(1+\varepsilon^{-1})(\Phi(t^2)-\Phi(0)) \sum_{j\in\mathbb{Z}}(\gamma_{j+1}-\gamma_{j-1}) |f(\gamma_j)|^2.
    \end{align*}
    Rearranging, we get 
    \begin{align*}
        &\varepsilon(\Phi(t^2)-\Phi(0))\int_\mathbb{R}|f(x)|^2\,dx\\
        &\leq\int_\mathbb{R}((\Phi(\xi^2)-\Phi(0))|\hat{f}(\xi)|^2\,d\xi+2(1+\varepsilon^{-1})(\Phi(t^2)-\Phi(0)) \sum_{j\in\mathbb{Z}}(\gamma_{j+1}-\gamma_{j-1}) |f(\gamma_j)|^2.
    \end{align*}
    Since $\varepsilon\Phi(0)\leq\Phi(0)$, we get the inequality
    \begin{align*}
        &\Phi(t^2)\int_\mathbb{R}|f(x)|^2\,dx\\
        &\leq\varepsilon^{-1}\int_\mathbb{R}\Phi(\xi^2)|\hat{f}(\xi)|^2\,d\xi
        +2\varepsilon^{-1}(1+\varepsilon^{-1})(\Phi(t^2)-\Phi(0)) \sum_{j\in\mathbb{Z}}(\gamma_{j+1}-\gamma_{j-1}) |f(\gamma_j)|^2.\qedhere
    \end{align*}
\end{proof}
We prove the lower frame bound by adapting the argument of Theorem~\ref{thm:widely-admissible-uniqueness}. 
\begin{lemma}\label{lem:lower-frame}
        Let $S$ be widely admissible where $\tilde{S}$ is not a polynomial and $0<\alpha<\frac{1}{2}$. Then there exist constants $K_{\alpha,S},C>0$ such that if $(\Lambda,M)$ satisfies (\ref{eq:sequence-ordering}) and
    \begin{align*}
        \sup_{j\in\mathbb{Z}}\max\{S^{-1}(|\lambda_j|),K_{\alpha,S}\}(\lambda_{j+1}-\lambda_j),\,\sup_{j\in\mathbb{Z}}\max\{S(|\mu_j|),K_{\alpha,S}\}(\mu_{j+1}-\mu_j)\leq \alpha
    \end{align*}
    then for any $f\in\mathcal{H}$, we have
    \begin{align*}
    C^{-1}\|f\|_{H(S)}^2
    \leq \sum_{j\in\mathbb{Z}}(\lambda_{j+1}-\lambda_{j-1})(1+|\lambda_j|^2)|f(\lambda_j)|^2 + \sum_{j\in\mathbb{Z}}(\mu_{j+1}-\mu_{j-1})(1+S^2(|\mu_j|))|\hat{f}(\mu_j)|^2.
    \end{align*}
\end{lemma}
\begin{proof}
Using the same reasoning as in the proof of Theorem~\ref{thm:widely-admissible-uniqueness}, we assume that $\tilde{S}^{(n)}(0)=0$ for all $n\leq\lceil\frac{2}{s}\rceil$. Furthermore, we assume that $(\Lambda, M)$ satisfies (\ref{eq:sequence-ordering}),
\begin{align*}
    \sup_{j\in\mathbb{Z}}\tilde{S}^{-1}(8eC_\varepsilon|\lambda_j|)^\frac{1}{s}(\lambda_{j+1}-\lambda_j)&\leq\frac{1-\varepsilon}{2},\quad\text{and}\\
    \sup_{j\in\mathbb{Z}} \left(\tilde{S}(\max\{|\mu_j|,|\mu_{j+1}|\}^s)+\sqrt{C_{\tilde{S},s}(48eC_\varepsilon\pi^{-1}s^2)}\right)(\mu_{j+1}-\mu_j)&\leq\frac{1-\varepsilon}{2},
\end{align*}
where $\tilde{S}^{-1}(y):=\inf\{x\geq0:\tilde{S}|_{[0,\infty)}(y)\geq x\}$ for some $\varepsilon>0$ satisfying $0<\varepsilon<1-2\alpha$ and $C_{\tilde{S},s}$ is defined as in (\ref{eq:CStilde}).
We can do this because for any $\delta>0$ satisfying $1+\delta\leq\frac{1-\varepsilon}{2\alpha}$ and widely admissible $S$ where $\tilde{S}$ is not a polynomial there exists a $K>0$ depending only on $S$, $\varepsilon$ and $\delta$ such that
\begin{align*}
    \sup_{j\in\mathbb{Z}}\tilde{S}^{-1}(8eC_\varepsilon|\lambda_j|)^\frac{1}{s}\leq\sup_{j\in\mathbb{Z}}(1+\delta)\max\{\tilde{S}^{-1}(|\lambda_j|)^\frac{1}{s},K\}&\leq\frac{1-\varepsilon}{2}\quad\text{and}\\
    \sup_{j\in\mathbb{Z}} (\tilde{S}(\max\{|\mu_j|,|\mu_{j+1}|\}^s))\leq\sup_{j\in\mathbb{Z}}(1+\delta)\max\{\tilde{S}(|\mu_j|^s),K\}&\leq\frac{1-\varepsilon}{2}.
\end{align*}
Applying Lemma~\ref{lem:pw-density-sampled} with $\Phi(t)=t^\theta$ for some $\theta\geq1$, we get
\begin{align*}
    &\tilde{S}^{-1}\left(8eC_\varepsilon\left(|v|-\frac{3}{2}u\right)\right)^{2\frac{\theta}{s}}\int_\mathbb{R}|G_v f(x)|^2dx\\
    &\leq C_\varepsilon\left(\int_\mathbb{R}|\xi|^{2\theta}|\hat{f}*\hat{G_v}|^2\,d\xi+\tilde{S}^{-1}\left(8e C_\varepsilon\left(|v|-\frac{3}{2}u\right)\right)^{\frac{2\theta}{s}}\sum_{j\in\mathbb{Z}}(\lambda_{j+1}-\lambda_{j-1})|G_vf(\lambda_j)|^2\right),
\end{align*}
where $G$ is defined as in (\ref{eq:Gx_def}) and $k$ and $u$ are parameters of $G$. Integrating over $|v|\geq\frac{3}{2}u$, we get
\begin{align*}
    &\int_{|v|\geq\frac{3}{2}u}\tilde{S}^{-1}\left(8eC_\varepsilon\left(|v|-\frac{3}{2}u\right)\right)^{2\frac{\theta}{s}}\int_\mathbb{R}|G_v f(x)|^2\,dx\,dv\\
    &\leq C_\varepsilon\Bigg(\iint_{\mathbb{R}\times\mathbb{R}}|\xi|^{2\theta}|\hat{f}*\hat{G_v}|^2\,d\xi\,dv\\
    &+\int_{|v|\geq\frac{3}{2}u}\tilde{S}^{-1}\left(8eC_\varepsilon\left(|v|-\frac{3}{2}u\right)\right)^{\frac{2\theta}{s}}\sum_{j\in\mathbb{Z}}(\lambda_{j+1}-\lambda_{j-1})|G_vf(\lambda_j)|^2\,dv\Bigg).
\end{align*}
We repeat the steps from the proof of Theorem~\ref{thm:widely-admissible-uniqueness} and get
    \begin{align*}
    &u\int_{|x|\geq 6u}\tilde{S}^{-1}(4eC_\varepsilon|x|)^{2\frac{\theta}{s}}\ |f(x)|^2dx\\
    &\leq2eu C_\varepsilon\left(\int_\mathbb{R}\left|\xi\right|^{2\theta}|\hat{f}(\xi)|^2d\xi
    +\int_\mathbb{R}|\hat{f}(\xi)|^2\,d\xi\right)\\
    &+C_\varepsilon \int_{|v|\geq\frac{3}{2}u}\tilde{S}^{-1}\left(8e C_\varepsilon\left(|v|-\frac{3}{2}u\right)\right)^{\frac{2\theta}{s}}\sum_{j\in\mathbb{Z}}(\lambda_{j+1}-\lambda_{j-1})|G_vf(\lambda_j)|^2\,dv.
\end{align*}
Since $G_v(x)\leq 1$ and $\operatorname{supp}G_v\subseteq [v-\frac{3}{2}u,v+\frac{3}{2}u]$, we know that $G_v(x)\leq \mathbbm{1}_{[v-\frac{3}{2}u,v+\frac{3}{2}u]}$. Hence
\begin{align*}
    &\sum_{j\in\mathbb{Z}}(\lambda_{j+1}-\lambda_{j-1})|f(\lambda_j)|^2\int_{|v|\geq\frac{3}{2}u}\tilde{S}^{-1}\left(8eC_\varepsilon\left(|v|-\frac{3}{2}u\right)\right)^\frac{2\theta}{s}|G_v(\lambda_j)|^2\,dv\\
    &\leq \sum_{j\in\mathbb{Z}}(\lambda_{j+1}-\lambda_{j-1})|f(\lambda_j)|^2\int_{|v|\geq\frac{3}{2}u}\tilde{S}^{-1}\left(8eC_\varepsilon\left(|v|-\frac{3}{2}u\right)\right)^\frac{2\theta}{s}\mathbbm{1}_{[v-\frac{3}{2}u,|v|+\frac{3}{2}u]}(\lambda_j)\,dv\\
    &\leq \sum_{j\in\mathbb{Z}}(\lambda_{j+1}-\lambda_{j-1})|f(\lambda_j)|^2\int_{\lambda_j-\frac{3}{2}u}^{\lambda_j+\frac{3}{2}u}\tilde{S}^{-1}\left(8eC_\varepsilon\left(|v|-\frac{3}{2}u\right)\right)^\frac{2\theta}{s}\,dv\\
    &\leq3u \sum_{j\in\mathbb{Z}}(\lambda_{j+1}-\lambda_{j-1})|f(\lambda_j)|^2(\tilde{S}^{-1}(8eC_\varepsilon|\lambda_j|))^\frac{2\theta}{s}.
\end{align*}
Combining these inequalities, we get
\begin{align*}
    &\int_{|x|\geq 6u}\tilde{S}^{-1}(4eC_\varepsilon|x|)^{2\frac{\theta}{s}}\ |f(x)|^2dx\\
    &\leq2e C_\varepsilon\left(\int_\mathbb{R}\left|\xi\right|^{2\theta}|\hat{f}(\xi)|^2d\xi
    +\int_\mathbb{R}|\hat{f}(\xi)|^2\,d\xi\right)+3C_\varepsilon \sum_{j\in\mathbb{Z}}(\lambda_{j+1}-\lambda_{j-1})|f(\lambda_j)|^2\tilde{S}^{-1}(8eC_\varepsilon|\lambda_j|)^\frac{2\theta}{s}.
\end{align*} 
Using the same power series trick as in the proof of Theorem~\ref{thm:widely-admissible-uniqueness}, we get
\begin{align*}
    2\int_\mathbb{R}x^2|f(x)|^2\,dx\leq \int_\mathbb{R}(S^2(|\xi|)+C_{\tilde{S},s}(48eC_\varepsilon\pi^{-1}s^2))|\hat{f}(\xi)|^2\,d\xi+6C_\varepsilon\sum_{j\in\mathbb{Z}}(\lambda_{j+1}-\lambda_{j-1})|f(\lambda_j)|^2\lambda_j^2.
\end{align*}
By Lemma~\ref{lem:fourier-weighted-gap}, we know that
\begin{align*}
    &\int_\mathbb{R}(\tilde{S}(|\xi|^s)+\sqrt{C_{\tilde{S},s}(48eC_\varepsilon\pi^{-1}s^2)})^2|\hat{f}(\xi)|^2\,d\xi\\
    &\leq\int_\mathbb{R}x^2|f(x)|^2\,dx+(1+\varepsilon^{-1})\sum_{j\in\mathbb{{Z}}}(\mu_{j+1}-\mu_j)(\tilde{S}(\max\{|\mu_j|,|\mu_{j+1}|\}^s)\\
    &+\sqrt{C_{\tilde{S},s}(48eC_\varepsilon\pi^{-1}s^2)})^2(|\hat{f}(\mu_j)|^2+|\hat{f}(\mu_{j+1})|^2).
\end{align*}
By Lemma~\ref{lem:f-shift-asymptotics}, we know that $S(|\mu_{j-1}|)\sim S(|\mu_j|)\sim S(|\mu_{j+1}|)$ as $j\to\pm\infty$, so
\begin{align*}
    &\sum_{j\in\mathbb{{Z}}}(\mu_{j+1}-\mu_{j-1})(\tilde{S}(\max\{|\mu_{j-1}|,|\mu_{j+1}|\}^s)
    +\sqrt{C_{\tilde{S},s}(48eC_\varepsilon\pi^{-1}s^2)})^2(|\hat{f}(\mu_j)|^2+|\hat{f}(\mu_{j+1})|^2)\\
    &\leq \tilde{C} \sum_{j\in\mathbb{Z}}(\mu_{j+1}-\mu_{j-1})(1+(\tilde{S}^2(|\mu_j|^s))|\hat{f}(\mu_j)|^2
\end{align*}
for some $\tilde{C}>0$. Combining these gives the bound
\begin{align*}
    &\int_\mathbb{R}(1+x^2)|f(x)|^2\,dx+\int_\mathbb{R}(1+S^2(|\xi|))|\hat{f}(\xi)|^2\,d\xi\\
    &\leq C\left(\sum_{j\in\mathbb{Z}}(\lambda_{j+1}-\lambda_{j-1})(1+|\lambda_j|^2)|f(\lambda_j)|^2+\sum_{j\in\mathbb{{Z}}}(\mu_{j+1}-\mu_{j-1})(1+S^2(|\mu_j|))|\hat{f}(\mu_j)|^2\right)
\end{align*}
for some $C>0$. 
\end{proof}
\subsection{Upper frame bound}
Our next goal is to establish an upper frame bound.

\begin{lemmax}[Lemma 2 in \cite{kulikov2025}]\label{lem:endpoint-trace}
For all $a < b$ and all $f \in \mathcal{H}$, we have
\[
\frac{1}{b-a} |f(a)|^2 \leq \frac{2}{(b-a)^2} \int_a^b |f(x)|^2\,dx 
+ \frac{2}{3} \int_a^b |f'(x)|^2\,dx .
\]
\end{lemmax}

\begin{lemma}\label{lem:upper-frame}
    Let $S$ be a widely admissible function where $\tilde{S}$ is not a polynomial.  Let $(\Lambda,M)$ be a pair satisfying 
    \begin{align*}
    c\leq\inf_{j\in \mathbb{Z}}\max\{S^{-1}(|\lambda_j|),1\}(\lambda_{j+1}-\lambda_j),\inf_{j\in\mathbb{Z}}\max\{S(|\mu_j|),1\}(\mu_{j+1}-\mu_j)
\end{align*}
for some $0<c<1$. Then there exists some $C>0$ such that
    \begin{align*}
        &\sum_{\lambda\in\Lambda}\frac{(1+\lambda^2)|f(\lambda)|^2}{\max\{S^{-1}(|\lambda|),1\}}+\sum_{\mu\in M}S(|\mu|)|\hat{f}(\mu)|^2\\
        &\leq C\left(\int_\mathbb{R}(1+x^2)|f(x)|^2\,dx+\int_\mathbb{R}(1+S^2(|\xi|))|\hat{f}(\xi)|^2\,d\xi\right).
    \end{align*}
\end{lemma}
\begin{proof}
    Using the same reasoning as in the proof of Theorem~\ref{thm:widely-admissible-uniqueness}, we assume that $\tilde{S}^{(n)}(0)=0$ for all $n\leq\lceil\frac{2}{s}\rceil$.
    Let $\tilde{S}^{-1}(y):=\inf\{x\geq0:\tilde{S}|_{[0,\infty)}(y)\geq x\}$. Since 
    \[
    \frac{c}{\max\{\tilde{S}^{-1}(\min\{|\lambda_j|,|\lambda_{j+1}|\})^\frac{1}{s},1\}}\leq \lambda_{j+1}-\lambda_j
    \]
    for all $j\in\mathbb{Z}$, we have by Lemma~\ref{lem:endpoint-trace}
    \begin{align*}
        &\left(\frac{c}{\max\{\tilde{S}^{-1}\left(|v|+\frac{3}{2}u\right)^\frac{1}{s},1\}}\right) \sum_{\lambda\in\Lambda}|G_vf(\lambda)|^2\\&\leq 2\left(\int_\mathbb{R}|G_v f(x)|^2\,dx+\left(\frac{c}{\max\{\tilde{S}^{-1}\left(|v|+\frac{3}{2}u\right)^\frac{1}{s},1\}}\right)^2\int_\mathbb{R}|\xi|^2|\hat{f}*\hat{G}_v(\xi)|^2\,d\xi\right)\\
        &\leq4\left(\int_\mathbb{R}|G_v f(x)|^2\,dx+\left(\frac{c}{\max\{\tilde{S}^{-1}\left(|v|+\frac{3}{2}u\right)^\frac{1}{s},1\}}\right)^{2\theta}\int_\mathbb{R}|\xi|^{2\theta}|\hat{f}*\hat{G}_v(\xi)|^2\,d\xi\right)
    \end{align*}
    for any $\theta\geq1$, where $G$ is defined as in (\ref{eq:Gx_def}) and $k$ and $u$ are parameters of $G$. Multiplying both sides by 
    \[c^{-1}\left(\max\left\{\tilde{S}^{-1}\left(|v|+\frac{3}{2}u\right)^\frac{1}{s},1\right\}\right)^{2\theta},\] 
    we have
     \begin{align*}
        &\left(\max\left\{\tilde{S}^{-1}\left(|v|+\frac{3}{2}u\right)^\frac{1}{s},1\right\}\right)^{2\theta-1} \sum_{\lambda\in\Lambda}|G_v f(\lambda)|^2\\        &\leq\frac{4}{c}\left(\left(\max\left\{\tilde{S}^{-1}\left(|v|+\frac{3}{2}u\right)^\frac{1}{s},1\right\}\right)^{2\theta} \int_\mathbb{R}|G_v f(x)|^2\,dx+\int_\mathbb{R}|c\xi|^{2\theta}|\hat{f}*\hat{G}_v(\xi)|^2\,d\xi\right).
    \end{align*}
    We can now integrate over $v$ and get
    \begin{align}
        &\int_\mathbb{R}\left(\max\left\{\tilde{S}^{-1}\left(|v|+\frac{3}{2}u\right)^\frac{1}{s},1\right\}\right)^{2\theta-1} \sum_{\lambda\in\Lambda}|G_v f(\lambda)|^2\,dv\label{eq:gv-mixed-space-upper}\\        
        &\leq\frac{4}{c}\int_\mathbb{R}\Bigg(\left(\max\left\{\tilde{S}^{-1}\left(|v|+\frac{3}{2}u\right)^\frac{1}{s},1\right\}\right)^{2\theta} \int_\mathbb{R}|G_v f(x)|^2\,dx\notag\\
        &+\int_\mathbb{R}|c\xi|^{2\theta}|\hat{f}*\hat{G}_v(\xi)|^2\,d\xi\Bigg)\,dv.\notag
    \end{align}
    We can lower bound the left-hand side of the inequality in the following way
    \begin{align}
        &\int_\mathbb{R} \max\left\{\tilde{S}^{-1}\left(|v|+\frac{3}{2}u\right)^\frac{1}{s},1\right\}^{2\theta-1} \sum_{\lambda\in\Lambda}|G_v f(\lambda)|^2dv\label{eq:gv-window-weight-lower}\\
        &\geq \sum_{\lambda\in\Lambda}|f(\lambda)|^2\int_\mathbb{R} \max\left\{\tilde{S}^{-1}\left(|v|+\frac{3}{2}u\right)^\frac{1}{s},1\right\}^{2\theta-1}\mathbbm{1}_{[v-\frac{1}{2}u,v+\frac{1}{2}u]}(\lambda)\,dv\notag\\
        &= \sum_{\lambda\in\Lambda}|f(\lambda)|^2\int_{\lambda-\frac{1}{2}u}^{\lambda+\frac{1}{2}u}\max\left\{\tilde{S}^{-1}\left(|v|+\frac{3}{2}u\right)^\frac{1}{s},1\right\}^{2\theta-1}\,dv\notag\\
        &\geq u\sum_{\lambda\in\Lambda}|f(\lambda)|^2\max\{\tilde{S}^{-1}(|\lambda|+u),1\}^\frac{2\theta-1}{s}.\notag\\
        &\geq u\sum_{\lambda\in\Lambda}|f(\lambda)|^2\max\{\tilde{S}^{-1}(|\lambda|),1\}^\frac{2\theta-1}{s}.\notag
    \end{align}
    We now seek to upper bound the right-hand side of (\ref{eq:gv-mixed-space-upper}). By Fubini's theorem, we can rearrange the first integral in the following way 
    \begin{align*}
        &\int_\mathbb{R}\max\left\{\tilde{S}^{-1}\left(|v|+\frac{3}{2}u\right),1\right\}^{\frac{2\theta-1}{s}}\int_\mathbb{R}|G_v f(x)|^2\,dx\,dv\\
        &=\int_\mathbb{R}|f(x)|^2\int_\mathbb{R}\max\left\{\tilde{S}^{-1}\left(|v|+\frac{3}{2}u\right),1\right\}^{\frac{2\theta-1}{s}}|G_v(x)|^2\,dv\,dx.
    \end{align*}
    We again use that $G(x)\leq\mathbbm{1}_{[-\frac{3}{2}u,\frac{3}{2}u]}(x)$ to get the following estimate
    \begin{align*}
        &\int_\mathbb{R}\max\left\{S^{-1}\left(|v|+\frac{3}{2}u\right),1\right\}^\frac{2\theta-1}{s}|G_v(x)|^2\,dv\\
        &\leq\int_\mathbb{R}\max\left\{S^{-1}\left(|v|+\frac{3}{2}u\right),1\right\}^\frac{2\theta-1}{s}\mathbbm{1}_{[v-\frac{3}{2}u,v+\frac{3}{2}u]}(x)\,dv\\
        &\leq\int_{x-\frac{3}{2}u}^{x+\frac{3}{2}u}\max\left\{S^{-1}\left(|v|+\frac{3}{2}u\right),1\right\}^\frac{2\theta-1}{s}\,dv\\
        &\leq 3u \max\{S^{-1}(|x|+3u),1\}^\frac{2\theta-1}{s}\\
        &\leq3u\max\{\tilde{S}^{-1}(2|x|),1\}^\frac{2\theta-1}{s}+3u\tilde{S}^{-1}(6u)^\frac{2\theta-1}{s}.
    \end{align*}
    This gives us the upper bound
    \begin{align}
        &\int_\mathbb{R}\max\left\{\tilde{S}^{-1}\left(|v|+\frac{3}{2}u\right),1\right\}^{\frac{2\theta-1}{s}}\int_\mathbb{R}|G_vf(x)|^2\,dx\,dv\label{eq:gv-weighted-L2-averaging}\\
        &\leq 3u\int_\mathbb{R}\max\{\tilde{S}^{-1}(2|x|),1\}^\frac{2\theta-1}{s}|f(x)|^2\,dx+3u\int_\mathbb{R}\tilde{S}^{-1}(6u)^\frac{2\theta-1}{s}|f(x)|^2\,dx.
        \notag
    \end{align}
    To upper bound the second integral on the right-hand side of (\ref{eq:gv-mixed-space-upper}), we again use the Plancherel trick (\ref{eq:plancherel_identity}) and (\ref{eq:fourier-convolution-bound}), this gives us
    \begin{align*}
        \iint_{\mathbb{R}\times\mathbb{R}}|\xi|^{2\theta}|\hat{f}*\hat{G}_v(\xi)|^2\,d\xi\,dv&=\iint_{\mathbb{R}\times\mathbb{R}}|\xi+\eta|^{2\theta}|\hat{f}(\xi)|^2|\hat{G}(\eta)|^2\,d\xi\,d\eta\\
        &\leq 2u \int_\mathbb{R}\left(|\xi|+\frac{k}{2 u}\right)^{2\theta}|\hat{f}(\xi)|^2\,d\xi
    \end{align*}
    if $k\geq \theta$. Choosing $k$ as $k=\lceil\theta\rceil$ and $u$ as $u=\frac{2\lceil\theta\rceil^2}{\pi}$, we have
    \begin{align}
         2u \int_\mathbb{R}\left(|\xi|+\frac{k}{2 u}\right)^{2\theta}|\hat{f}(\xi)|^2\,d\xi\leq 2eu\int_\mathbb{R}(1+|\xi|^{2\theta})|\hat{f}(\xi)|^2\,d\xi.\label{eq:freq-weight-upper}
    \end{align}
    Combining the bounds (\ref{eq:gv-mixed-space-upper}), (\ref{eq:gv-window-weight-lower}), (\ref{eq:gv-weighted-L2-averaging}), and (\ref{eq:freq-weight-upper}), we get
    \begin{align*}
        &\sum_{\lambda\in\Lambda}|f(\lambda)|^2\max\left\{\tilde{S}^{-1}(|\lambda|)^\frac{2\theta-1}{s},1\right\}\\
        &\leq\frac{12}{c}\int_\mathbb{R}|f(x)|^2\max\left\{\tilde{S}^{-1}(2|x|)^\frac{2\theta-1}{s},1\right\}\,dx+\frac{12}{c}\tilde{S}^{-1}\left(\frac{48}{\pi}\theta^2\right)^\frac{2\theta-1}{s}\int_\mathbb{R}|f(x)|^2\,dx\\
        &+\frac{8e}{c}\int_\mathbb{R}(1+|\xi|^{2\theta})|\hat{f}(\xi)|^2\,d\xi.
    \end{align*}
    since $\lceil\theta\rceil\leq2\theta$. We choose $\theta$ such that $2\frac{\theta}{s}=n\in\mathbb{N}$ and multiply by $\frac{A_n}{n!}$ where $A_n=\frac{d^n}{dz^n}\tilde{S}^2(0)$, this gives us the following inequality
    \begin{align*}
        \frac{A_n}{n!}\sum_{\lambda\in\Lambda}|f(\lambda)|^2
        \max\left\{{\tilde{S}^{-1}(|\lambda|)},\,1\right\}^{n-\frac{1}{s}} 
        &\leq \frac{24e}{c}\,\frac{A_n}{n!}\Bigg(
            \int_{\mathbb{R}} |f(x)|^2
            \max\{{\tilde{S}^{-1}(2|x|)},\,1\}^n\,dx \\
        & + \frac{1}{2e}\,{\tilde{S}^{-1}\!\left(\tfrac{12}{\pi}(ns)^2\right)}^{n}
            \int_{\mathbb{R}} |f(x)|^2\,dx
            + \int_{\mathbb{R}} (1+|\xi|^{ns})|\hat{f}(\xi)|^2\,d\xi
        \Bigg).
    \end{align*}

    By the monotone convergence theorem, we have
    \begin{align*}
        &\sum_{\lambda\in\Lambda}\frac{\max\{|\lambda|^2,1\}|f(\lambda)|^2}{\max\{S^{-1}(|\lambda|),1\}}\\&\leq\frac{24e}{c}\left(\int_\mathbb{R}|f(x)|^2(|x|^2+C_{\tilde{S},s}(12\pi^{-1}s^2))\,dx+\int_\mathbb{R}(S^2(1)+S^2(|\xi|))|\hat{f}(\xi)|^2\,d\xi\right)\\
        &\leq \frac{24e}{c}(C_{\tilde{S},s}(12\pi^{-1}s^2)+S^2(1))\left(\int_\mathbb{R}(1+|x|^2)|f(x)|^2\,dx+\int_\mathbb{R}(1+S^2(|\xi|)^2)|\hat{f}(\xi)|^2\,d\xi\right)
        \end{align*}
    with $C_{\tilde{S},s}$ defined as in (\ref{eq:CStilde}). By Lemma~\ref{lem:endpoint-trace}, we have
    \begin{align*}
        \sum_{\mu\in M}c\max\{S(|\mu|),1\}|\hat{f}(\mu)|^2&\leq2\left(\int_\mathbb{R}x^2|f(x)|^2\,dx+c^2\int_\mathbb{R}\max\{S^2(|\xi|),1\}|\hat{f}(\xi)|^2\,d\xi\right).
    \end{align*}
    Combining these bounds gives us
    \begin{align*}
        &\sum_{\lambda\in\Lambda}\frac{(1+|\lambda|^2)|f(\lambda)|^2}{\max\{S^{-1}(|\lambda|),1\}}+\sum_{\mu\in M}\max\{S(|\mu|^s),1\}|\hat{f}(\mu)|^2\\
        &\leq C\left(\int_\mathbb{R}(1+x^2)|f(x)|^2\,dx+\int_\mathbb{R}(1+S^2(|\xi|))|\hat{f}(\xi)|^2\,d\xi\right)
    \end{align*}
    for some $C>0$.
\end{proof}
The proof of Theorem~\ref{thm:main-sampling-inequality} is now quite simple.
\begin{proof}[Proof of Theorem~\ref{thm:main-sampling-inequality}]
    If $S$ is a polynomial, then $S(t)\sim t^{p-1}$ and $S^{-1}(t)\sim t^{q-1}$ as $t\to\infty$ for some $\frac{1}{p}+\frac{1}{q}=1$. By Theorem 2 in \cite{kulikov2025}, we get the desired bound.
    
    Otherwise, by Lemma~\ref{lem:lower-frame} we get
    the bound
        \begin{align*}
    A\|f\|_{H(S)}^2
    \leq \sum_{j\in\mathbb{Z}}(\lambda_{j+1}-\lambda_{j-1})(1+|\lambda_j|^2)|f(\lambda_j)|^2 + \sum_{j\in\mathbb{Z}}(\mu_{j+1}-\mu_{j-1})(1+ S^2(|\mu_j|))|\hat{f}(\mu_j)|^2
    \end{align*}   
    for some $A>0$. 
    We know that 
    \[
    \lambda_{j+1}-\lambda_{j-1}\leq\frac{2\overline{\alpha}}{\max\{S^{-1}(\min\{|\lambda_{j-1}|,|\lambda_j|\}),K_{\overline{\alpha}, S}\}}\leq\frac{2C_1\overline{\alpha}}{\max\{S^{-1}(|\lambda_j|),K_{\overline{\alpha}, S}\}}
    \]
    for some $C_1>0$, since $S^{-1}(|\lambda_j|)\sim S^{-1}(|\lambda_{j+1}|)$ as $j\to\pm\infty$ by Lemma~\ref{lem:f-shift-asymptotics}. Consequently, 
     \[
    \lambda_{j+1}-\lambda_{j-1}\leq\frac{2C_1\overline{\alpha}}{\max\{S^{-1}(|\lambda_{j-1}|),K_{\overline{\alpha}, S}\}}\leq2 C_1\frac{\overline{\alpha}}{\underline{\alpha}}(\lambda_{j+1}-\lambda_j).
    \]
    By the same reasoning, there exists a $C_2>0$ such that 
    \[
    \mu_{j+1}-\mu_j\leq 2C_2 \frac{\overline{\alpha}}{\underline{\alpha}}(\mu_{j+1}-\mu_j).
    \]
    Thus letting $\tilde{A}:= \frac{A\underline{\alpha}}{2\max\{C_1,C_2\} \overline{\alpha}}$, we have
    \begin{align*}
    \tilde{A}\|f\|_{H(S)}^2
    \leq \sum_{j\in\mathbb{Z}}(\lambda_{j+1}-\lambda_j)(1+|\lambda_j|^2)|f(\lambda_j)|^2 + \sum_{j\in\mathbb{Z}}(\mu_{j+1}-\mu_j)(1+ S^2(|\mu_j|))|\hat{f}(\mu_j)|^2.
    \end{align*} 
    By Lemma~\ref{lem:upper-frame}, we know that
    \begin{align*}
        &\sum_{\lambda\in\Lambda}\frac{(1+|\lambda|^2)|f(\lambda)|^2}{\max\{S^{-1}(|\lambda|),1\}}+\sum_{\mu\in M}\max\{S(|\mu|),1\}|\hat{f}(\mu)|^2\\
        &\leq B\left(\int_\mathbb{R}(1+x^2)|f(x)|^2\,dx+\int_\mathbb{R}(1+S^2(|\xi|))|\hat{f}(\xi)|^2\,d\xi\right)
    \end{align*}
    for some $B>0$. Since 
    \[
    \max\{ S^{-1}(|\lambda_j|),K\}(\lambda_{j+1}-\lambda_j), \max\{ S(|\mu_j|),K\}(\mu_{j+1}-\mu_j)\leq \frac{1}{2},
    \]
    we know that 
    \begin{align*}
        (\lambda_{j+1}-\lambda_j)(1+|\lambda_j|^2)|f(\lambda_j)|^2& \leq\frac{1}{2\min\{K,1\}}\frac{(1+|\lambda_j|^2)|f(\lambda_j)|^2}{\max\{S^{-1}(|\lambda_j|),1\}}\quad\text{and}\\
        (\mu_{j+1}-\mu_j)(1+S^2(|\mu_j|))|\hat{f}(\mu_j)|^2&\leq \frac{1}{\min\{K,1\}}\frac{\max\{S^2(|\mu_j|),1\}|\hat{f}(\mu_j)|^2}{\max\{S(|\mu_j|),1\}}.
    \end{align*}
    Thus, we have 
    \begin{align*}
        &\sum_{j\in\mathbb{Z}}(\lambda_{j+1}-\lambda_j)(1+\lambda_j^2)|f(\lambda_j)|^2+\sum_{j\in\mathbb{Z}}(\mu_{j+1}-\mu_j)(1+S^2(|\mu_j|)|\hat{f}(\mu_j)|^2\\
        &\leq \frac{1}{\min\{K,1\}}\left(\sum_{\lambda\in\Lambda}\frac{(1+|\lambda|^2)|f(\lambda)|^2}{\max\{S^{-1}(|\lambda|),1\}}+\sum_{\mu\in M}S(|\mu|)|\hat{f}(\mu)|^2\right).
    \end{align*}
    Letting $C:=\max\left\{\tilde{A}^{-1},\frac{B}{\min\{K,1\}}\right\}$, we get the desired frame bound.
\end{proof}

\section*{Acknowledgments}
I want to thank Kristian Seip for introducing me to the fascinating topic of Fourier uniqueness and for providing valuable feedback on the exposition. I am grateful to Aleksei Kulikov for suggesting that Lemma~\ref{lem:fractional-poincare-wirtinger} could be applied to more asymmetric pairs of sequences than those considered in an early draft of this paper. I also thank him for suggesting condition (ii) in Definition~\ref{def:widely-admissible} and for
 conjecturing Proposition~\ref{prop:widely-admissible-dominates}. I am also grateful to Mikhail Sodin and Denis Zelent for helpful comments.
\printbibliography
\end{document}